\newcommand{\iso}{\cong}
\numberwithin{equation}{section}
\newtheorem{thm}[equation]{Theorem}
\newtheorem{lem}[equation]{Lemma}
\newtheorem{cor}[equation]{Corollary}
\newtheorem{prop}[equation]{Proposition}
\theoremstyle{definition}
\newtheorem{defn}[equation]{Definition}
\newtheorem{ex}[equation]{Example}
\newtheorem{notn}[equation]{Notation}
\theoremstyle{remark}
\newtheorem{rem}[equation]{Remark}
\theoremstyle{remark}
\newtheorem{rems}[equation]{Remarks}
\DeclareMathOperator{\Aut}{Aut}
\DeclareMathOperator{\GL}{GL}
\DeclareMathOperator{\End}{End}
\DeclareMathOperator{\SL}{SL}
\DeclareMathOperator{\Sp}{Sp}
\DeclareMathOperator{\PGL}{PGL}
\DeclareMathOperator{\Lie}{Lie}
\DeclareMathOperator{\Ad}{Ad}
\DeclareMathOperator{\rk}{rk}
\DeclareMathOperator{\id}{id}
\DeclareMathOperator{\hgt}{ht}
\DeclareMathOperator{\Hom}{Hom}
\newcommand\inverse{{^{-1}}}
\renewcommand{\Im}{{\rm Im}}
\newcommand{\Gm}{\mathbb{G}_m}
\renewcommand{\AA}{\mathscr{A}}
\newcommand{\ZZ}{\mathbb{Z}}
\newcommand{\BBF}{\mathbb{F}}
\newcommand{\BBN}{\mathbb{N}}
\newcommand{\BBG}{\mathbb{G}}
\newcommand{\NN}{\mathscr{N}}
\newcommand{\UU}{\mathscr{U}}
\newcommand{\DD}{\mathscr{D}}
\renewcommand{\SS}{\mathscr{S}}
\newcommand{\frakg}{\mathfrak g}
\newcommand{\frakp}{\mathfrak p}
\subjclass[2010]{20G15 (14L24)}
\keywords{$G$-complete reducibility; $G$-irreducibility, distinguished unipotent elements, distinguished nilpotent elements, finite groups of Lie type, good $A_1$ subgroups}
\dedicatory{Dedicated to the fond memory of Gary Seitz}
\title[On good $A_1$ subgroups and overgroups of distinguished unipotent elements]
{On good $A_1$ subgroups, Springer maps, and overgroups of distinguished unipotent elements in reductive groups}
\author[M.\  Bate]{Michael Bate}
\address
{Department of Mathematics,
University of York,
York YO10 5DD,
United Kingdom}
\email{michael.bate@york.ac.uk}
\author[S. B\"ohm]{S\"oren B\"ohm}
\address
{Fakult\"at f\"ur Mathematik,
	Ruhr-Universit\"at Bochum,
	D-44780 Bochum, Germany}
\email{soeren.boehm@rub.de}
\author[B.\ Martin]{Benjamin Martin}
\address
{Department of Mathematics,
University of Aberdeen,
King's College,
Fraser Noble Building,
Aberdeen AB24 3UE,
United Kingdom}
\email{b.martin@abdn.ac.uk}
\author[G. R\"ohrle]{Gerhard R\"ohrle}
\address
{Fakult\"at f\"ur Mathematik,
Ruhr-Universit\"at Bochum,
D-44780 Bochum, Germany}
\email{gerhard.roehrle@rub.de}
\begin{document}

\begin{abstract}
	Suppose $G$ is a simple algebraic group defined over an algebraically closed field of good characteristic $p$. 
	In 2018 Korhonen showed that if $H$ is a connected reductive subgroup of $G$ which contains a distinguished unipotent element $u$ of $G$ of order $p$, then $H$ is $G$-irreducible in the sense of Serre.  We present a short and uniform proof of this result under an extra hypothesis using so-called \emph{good $A_1$} subgroups of $G$, introduced by Seitz. In the process we prove some new results about good $A_1$ subgroups of $G$ and their properties.  We also formulate a counterpart of Korhonen's theorem for overgroups of $u$ which are finite groups of Lie type. Moreover, we generalize both results above by removing the restriction on the order of $u$ under a mild condition on $p$ depending on the rank of $G$, and we present an analogue of Korhonen's theorem for Lie algebras.
\end{abstract}

\maketitle

\section{Introduction and main results}
\label{sec:intro}

Throughout, $G$ is a connected reductive linear algebraic group
defined over an algebraically closed field $k$ 
of characteristic $p$ and 
$H$ is a closed subgroup of $G$.

Following  Serre \cite{serre2}, we say that 
$H$ is \emph{$G$-completely reducible} ($G$-cr for short) 
provided that whenever $H$ is contained in a parabolic subgroup $P$ of $G$,
it is contained in a Levi subgroup of $P$, 
and that $H$ is \emph{$G$-irreducible} ($G$-ir for short) provided $H$ is not contained in any proper parabolic subgroup of $G$ at all. Clearly, if $H$ is $G$-irreducible, it is trivially $G$-completely reducible, and an overgroup of a $G$-irreducible subgroup is again  $G$-irreducible; 
for an overview of this concept see \cite{BMR}, \cite{serre1} and \cite{serre2}.
Note that in case $G = \GL(V)$ a subgroup $H$ is $G$-completely reducible exactly when 
$V$ is a semisimple $H$-module and it is $G$-irreducible precisely when $V$ is an irreducible $H$-module.
Recall that if $H$ is $G$-completely reducible, 
then the identity component $H^\circ$ of $H$ is reductive,  \cite[Prop.~4.1]{serre2}.

A unipotent element $u$ of $G$ is  \emph{distinguished} provided any torus in the centraliser $C_G(u)$ of $u$ in $G$ is central in $G$.
Likewise, a nilpotent element $X$ of the Lie algebra $\frakg$ of $G$ is \emph{distinguished} provided any torus in the centraliser $C_G(X)$ of $X$ in $G$ is central in $G$, see \cite[\S 5.9]{carter:book} and \cite[\S 4.1]{Jantzen}.
For instance, regular unipotent elements in $G$ are distinguished, and so are regular nilpotent elements in $\frakg$ \cite[III 1.14]{springersteinberg} (or \cite[Prop.~5.1.5]{carter:book}).  The converse is true in type $A$, since a distinguished unipotent or nilpotent element must clearly consist of a single Jordan block.
Overgroups of regular unipotent elements have attracted much attention in the literature, e.g., see \cite{suprunenko}, \cite{saxl-seitz}, \cite{TZ},  \cite{MT}, and \cite{BMR:regular}.

In \cite{korhonen}, Korhonen proves the following remarkable result.

\begin{thm}[{\cite[Thm.~6.5]{korhonen}}]
	\label{thm:korhonen}
	Suppose $G$ is simple and $p$ is good for $G$.
	Let $H$ be a reductive subgroup of $G$. Suppose $H^\circ$ contains a distinguished unipotent element of $G$ of order $p$. Then $H$ is $G$-irreducible.
\end{thm}

\noindent One can easily extend this theorem to arbitrary connected reductive $G$ by reducing to the simple case: see Remark~\ref{rem:simple_reduction}.

Korhonen's  proof  of  Theorem~\ref{thm:korhonen} depends on checks 
for the various possible Dynkin types for simple $G$. 
E.g., for $G$ simple of exceptional type, 
Korhonen's  argument relies on 
long exhaustive case-by-case investigations from \cite{LT},  where
all connected reductive non-$G$-cr subgroups are classified 
in the exceptional type groups in good characteristic.
For classical $G$, Korhonen requires an intricate  classification of 
all $\SL_2$-representations  on which a non-trivial unipotent element of $\SL_2$ acts with at most one Jordan
block of size $p$.
Our main aim is to give a short uniform proof 
of Theorem \ref{thm:korhonen} 
in \S \ref{sec:proofs}
without resorting to further case-by-case checks, but imposing an extra hypothesis which allows us to
use a landmark result by Seitz (see \S \ref{sec:goodA1overgroups}).

\begin{thm}
\label{thm:dist-orderp}
 Suppose $p$ is good for $G$.  Let $H$ be a connected reductive subgroup of $G$.  Suppose $H$ contains a distinguished unipotent element of $G$ of order $p$.  Suppose also that
 
 \medskip
 \noindent $(\dagger)$ there exists a Springer map $\phi$ for $H$ such that $\phi(u)$ is a distinguished element of $\frakg$.

 \medskip 
 \noindent Then $H$ is $G$-irreducible.
\end{thm}

\noindent For a discussion of Springer maps, see Section~\ref{sec:springer}.

\begin{rem}
	\label{rem:classical}
	Suppose as in Theorem \ref{thm:korhonen}, that $G$ is simple classical with natural module $V$, and $p \ge \dim V > 2$. Then, thanks to \cite[Prop.\ 3.2]{jantzen0},  
	$V$ is semisimple as an $H^\circ$-module, and by \cite[(3.2.2(b))]{serre2}, this is equivalent to $H^\circ$ being $G$-cr. Then $H$ is $G$-ir, by 
	Lemma \ref{lem:GcrGir;dist}. This gives a short uniform proof of the conclusion of Theorem \ref{thm:korhonen} in this case, as the bound 
	$p \ge \dim V > 2$ ensures that every distinguished unipotent element (including the regular ones) is of order $p$. 
	The conclusion can fail if the bound is not satisfied: see Theorem \ref{thm:korhonen-bad}. 	
\end{rem}

We say that a subgroup of $G$ is of \emph{type $A_1$} if it is isomorphic to $\SL_2$ or $\PGL_2$.  Our proof of Theorem~\ref{thm:dist-orderp} involves the notion of a \emph{good $A_1$} subgroup, which was introduced by Seitz in \cite{seitz}.  We consider the interaction of good $A_1$ subgroups with associated cocharacters and Springer maps; we identify a useful class of Springer maps (Definition~\ref{defn:logarithmic}), which we call \emph{logarithmic} Springer maps, and we prove some results that are of interest in their own right (see Corollary~\ref{cor:compatible_good} and Lemma~\ref{lem:good_ascent}).  Our main result on good $A_1$ subgroups is the following (see Section~\ref{sec:good_characterisations} for definitions).

\begin{thm}
\label{thm:good_equivalences_intro}
 Suppose $p$ is good for $G$ and let $A$ be an $A_1$ subgroup of $G$.  The following  are equivalent.
 \begin{itemize}
  \item[(i)] $A$ is sub-principal.
  \item[(ii)] $A$ is optimal.
  \item[(iii)] $A$ is good.
 \end{itemize}
\end{thm}

Theorem~\ref{thm:korhonen} covers the situation when $p$ is good for $G$.  There are only a few cases when $G$ is simple, $p$ is bad for $G$, and $G$ admits a distinguished unipotent element of order $p$, by work of Proud-Saxl-Testerman \cite[Lem.~4.1, Lem.~4.2]{PST} (see Lemmas \ref{lem:pst41} and \ref{lem:pst42}).
In this case
the conclusion of Theorem \ref{thm:korhonen} fails precisely in one instance, as observed in \cite[Prop.~1.2]{korhonen} (Example \ref{ex:c2}), else it is valid (Example \ref{ex:g2}).
Combining the cases when $p$ is bad for $G$ with Theorem \ref{thm:dist-orderp}, we recover Korhonen's main theorem \cite[Thm.~1.3]{korhonen} (assuming that $(\dagger)$ from Theorem~\ref{thm:dist-orderp} holds).

\begin{thm}
	\label{thm:korhonen-bad}
	Suppose $G$ is simple and let $H$ be a reductive subgroup of $G$. 
	Suppose $H^\circ$ contains a distinguished unipotent element of $G$ of order $p$, and suppose that $(\dagger)$ holds. Then $H$ is $G$-irreducible, unless $p = 2$, $G$ is of type $C_2$, and $H$ is a type $A_1$ subgroup of $G$.
\end{thm}

Our next goal is an extension of Theorem \ref{thm:dist-orderp} to finite groups of Lie type in $G$.
Let $\sigma: G \to G$ be a Steinberg endomorphism of $G$,
so that the finite fixed point subgroup $G_\sigma = G(q)$ is a finite group of Lie type over the field $\BBF_q$ of $q$ elements. 
For a Steinberg endomorphism $\sigma$ of $G$ and a connected reductive $\sigma$-stable subgroup $H$ of $G$, $\sigma$ is also a Steinberg endomorphism for $H$ with finite fixed point subgroup  $H_\sigma = H \cap G_\sigma$, \cite[7.1(b)]{steinberg:end}.
Obviously, one cannot directly appeal to Theorem~\ref{thm:dist-orderp} to deduce anything about $H_\sigma$, because $(H_\sigma)^\circ$ is trivial.
For the notion of a $q$-Frobenius endomorphism, see \S \ref{sec:steinberg}.

\begin{thm}
	\label{thm:dist-orderp-finite}
	Let $H$ be a connected reductive subgroup of $G$ and  
	suppose $p$ is good for $G$. 
	Let $\sigma\colon G\to G$ be a Steinberg endomorphism stabilizing $H$ such that $\sigma|_H$ is a $q$-Frobenius endomorphism of $H$. If $G$ admits components of exceptional type, then assume $q > 7$.  Suppose $H_\sigma$ contains a distinguished unipotent element of $G$ of order $p$, and suppose that $(\dagger)$ holds.  Then $H_\sigma$ is $G$-irreducible. 
\end{thm}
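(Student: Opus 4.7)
The plan is to adapt the good-$A_1$ strategy used in the proof of Theorem~\ref{thm:dist-orderp} to the finite setting. Specifically, the idea is to locate a $\sigma$-stable good $A_1$ subgroup $A$ of $G$ that contains $u$ and lies inside $H$, and then to transfer $G$-irreducibility from the algebraic group $A$ down to its finite fixed-point subgroup $A_\sigma \subseteq H_\sigma$, rather than to $H_\sigma$ directly. Once $A_\sigma$ is shown to be $G$-irreducible, the conclusion is automatic, since $G$-irreducibility is inherited by overgroups.

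First, since $u \in H_\sigma \subseteq H$ is distinguished in $G$ of order $p$ and $p$ is good for $G$, Seitz's theory of good $A_1$ subgroups produces an essentially unique good $A_1$ subgroup $A \subseteq G$ with $u \in A$ such that $A$ itself is $G$-irreducible; the good-$A_1$ proof of Theorem~\ref{thm:dist-orderp} in \S\ref{sec:goodA1overgroups} moreover shows that $A$ may be placed inside the connected reductive overgroup $H$. Because $\sigma$ fixes $u$ and permutes good $A_1$ subgroups of $G$ containing $u$, the uniqueness property forces $\sigma(A) = A$; thus $A$ is $\sigma$-stable. The hypothesis that $\sigma|_H$ is a $q$-Frobenius then restricts to a $q$-Frobenius on $A$, and so $A_\sigma$ is a standard finite group of Lie type of type $A_1$, namely $\SL_2(\BBF_q)$ or $\PSL_2(\BBF_q)$, sitting inside $H_\sigma$.

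It remains to upgrade the $G$-irreducibility of the algebraic $A$ to that of the finite $A_\sigma$. This is precisely where the lower bound on $q$ enters: for components of exceptional type, the bound $q > 7$ is needed to rule out small-field configurations in which the finite $A_\sigma$, embedded via a good $A_1$, could be squeezed into a proper parabolic subgroup of $G$; for components of classical type, no such exceptions occur. I expect this last transfer of irreducibility from $A$ to $A_\sigma$ to be the main obstacle, as it is the only step that is not purely formal—the location of $A$ inside $H$ and its $\sigma$-stability come for free once the good-$A_1$ machinery is in place. Making this final step precise will likely require an explicit list of bad $q$ in the exceptional types, cross-checked against the classification of maximal subgroups of exceptional groups of Lie type in order to confirm that no proper parabolic of $G$ can swallow the relevant $\SL_2(\BBF_q)$ or $\PSL_2(\BBF_q)$ once $q > 7$.
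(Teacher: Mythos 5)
Your overall shape (find a $\sigma$-stable good $A_1$ subgroup $A\subseteq H$ through $u$, pass to $A_\sigma\subseteq H_\sigma$, and use that overgroups of $G$-irreducible subgroups are $G$-irreducible) is the same as the paper's, but two steps in your plan do not work as written. First, your argument for $\sigma$-stability of $A$ is flawed: good $A_1$ overgroups of $u$ are \emph{not} unique, they form a single orbit under $R_u(C_G(u))$ (Theorem \ref{thm:seitz1}(ii)), so ``$\sigma$ permutes them and uniqueness forces $\sigma(A)=A$'' fails. Seitz's Theorem \ref{thm:seitz2}(i) does give \emph{some} $\sigma$-stable good $A_1$ containing $u$, but with no control that it lies in $H$, which is exactly what you need in order to have $A_\sigma\subseteq H_\sigma$. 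The paper resolves this in the opposite order: it first produces a $\sigma$-stable $A_1$ subgroup of $H$ containing $u$ by the equivariant form of Testerman's theorem (Theorem \ref{thm:A1pgood}) applied to $H$ --- this is precisely where the hypothesis that $\sigma|_H$ is a $q$-Frobenius endomorphism of $H$ is used, and when $p$ is bad for $H$ it needs Theorem \ref{thm:A1pbad} and Lemma \ref{lem:g2p=3} --- and only afterwards verifies that this $A$ is a good $A_1$ of $G$, via an associated cocharacter $\lambda\in\Omega^a_A(u)$, Lemma \ref{lem:RF} (using that $u$ is distinguished in $G$) and Lawther's criterion (Lemma \ref{lem:lawther}).

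Second, the step you yourself flag as the main obstacle --- passing from the algebraic $A$ to the finite $A_\sigma$ --- is left as a plan (an explicit list of bad $q$ checked against classifications of maximal subgroups), not a proof; note that $G$-irreducibility of $A$ gives nothing directly about $A_\sigma$, since the Zariski closure of $A_\sigma$ is just the finite group itself. The paper's route is different and requires no case analysis at this point: by Seitz's Theorem \ref{thm:seitz2}(iii) (this is where $q>7$ for exceptional components enters), $A_\sigma$ is $\sigma$-completely reducible, hence $G$-completely reducible by \cite[Thm.~1.4]{HRG} (Remark \ref{rem:seitz}(i)), and then Lemma \ref{lem:GcrGir;dist} upgrades $G$-complete reducibility to $G$-irreducibility because $A_\sigma$ contains the distinguished unipotent element $u$. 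You should also include the routine reduction to $G$ simple and simply connected and $H$ semisimple, which the paper carries out via \cite[Lem.~2.12]{BMR} before any of the above.
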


Combining Theorem \ref{thm:dist-orderp-finite}  
with the aforementioned results from \cite{PST}, we are able to deduce the following 
analogue of Theorem \ref{thm:korhonen-bad} for finite subgroups of Lie type in $G$.

\begin{thm}
	\label{thm:korhonen-bad-finite}
	Let $H$ be a connected reductive subgroup of $G$. 
	Let $\sigma\colon G\to G$ be a Steinberg endomorphism stabilizing $H$ such that $\sigma|_H$ is a $q$-Frobenius endomorphism of $H$.  If $G$ is of exceptional type, then assume $q > 7$.  Suppose $H_\sigma$ contains a distinguished unipotent element of $G$ of order $p$, and suppose that $(\dagger)$ holds.  Then $H_\sigma$ is $G$-irreducible, unless $p = 2$, $G$ is of type $C_2$, and $H$ is a type $A_1$ subgroup of $G$.	
\end{thm}

In the special instance in Theorems \ref{thm:dist-orderp-finite} and \ref{thm:korhonen-bad-finite} when $H_\sigma$ contains a regular unipotent element $u$ from 
 $G$, the 
  conclusion of both theorems holds without 
  any restriction on the order of $u$ and without any restriction on $q$ (and without any exceptions of the type seen in Theorem \ref{thm:korhonen-bad-finite}); see \cite[Thm.~1.3]{BMR:regular}.
  
  In our final main result we show that we can remove condition $(\dagger)$ and the condition that $u$ has order $p$ from Theorem~\ref{thm:dist-orderp}, at the cost of increasing our bound on $p$.  We also obtain an analogue under the hypothesis that $\Lie(H)$ contains a distinguished nilpotent element of $\frakg$.  For a unipotent element $u\in G$ to be
distinguished is a mere condition on the structure of the centralizer $C_G(u)$ of $u$ in $G$. The extra condition 
for $u$ to have order $p$ is thus somewhat artificial. 
This restriction in Theorems~\ref{thm:korhonen} and \ref{thm:dist-orderp} is due to the methods used in \cite{korhonen} and in our proofs in \S \ref{sec:proofs}, which require the unipotent element to lie in a subgroup of type $A_1$; such an element must obviously have order $p$.

To state our theorem, we need to introduce an invariant $a(G)$ of $G$ from \cite[\S 5.2]{serre2}:
for $G$ simple, set $a(G) = \rk(G) +1$, 
where $\rk(G)$ is the rank of $G$.
For reductive $G$, let $a(G) = \max\{1, a(G_1), \ldots, a(G_r)\}$, 
where $G_1, \ldots, G_r$ are the simple components of $G$.

\begin{thm}
	\label{thm:dist}
	Suppose $p \ge a(G)$. Let $H$ be a reductive subgroup of $G$. 
	Suppose $H^\circ$ contains a distinguished unipotent element of $G$ or $\Lie(H)$ contains a distinguished nilpotent element of $\frakg$. 	Then  $H$ is $G$-irreducible.
\end{thm}

\smallskip

Section~\ref{sec:prelim} contains background material.  In Section~\ref{sec:arborder} we prove Theorem~\ref{thm:dist}, along with some analogues for finite subgroups of Lie type.  In Section~\ref{sec:springer_assoc} we discuss Springer maps and associated cocharacters.  We recall Seitz's notion of good $A_1$ subgroups in Section~\ref{sec:A1s} and we prove Theorem~\ref{thm:good_equivalences_intro} in Section~\ref{sec:good_characterisations} (see Theorem~\ref{thm:good_equivalences}).  Theorems \ref{thm:dist-orderp} and \ref{thm:korhonen-bad}--\ref{thm:korhonen-bad-finite} are proved in Section~\ref{sec:proofs}.

\section{Preliminaries}
\label{sec:prelim}

\subsection{Notation}
Throughout, we work over an algebraically closed field $k$ of characteristic $p$.   For convenience we assume that $p> 0$ unless otherwise stated; most of our results hold for $p= 0$ with obvious modifications and in many cases the proof is much easier (see Remark~\ref{rem:dist}(vi), for example). All affine varieties are considered over $k$ and are identified with their sets of $k$-points.
A linear algebraic group $H$ over $k$ has identity component $H^\circ$; if $H=H^\circ$, then we say that $H$ is \emph{connected}.
We denote by $R_u(H)$ the \emph{unipotent radical} of $H$; if $R_u(H)$ is trivial, then we say $H$ is \emph{reductive}.

Throughout, $G$ denotes a connected reductive linear algebraic group over $k$. All subgroups of $G$ considered are closed.
By $\DD G$ we denote the derived subgroup of $G$, and likewise for subgroups of $G$.
We denote the Lie algebra of $G$ by $\Lie (G)$ or by $\frakg$.  If $p> 0$ then we denote the $p$-power map on $\frakg$ by $X\mapsto X^{[p]}$.  By a Levi subgroup of $G$ we mean a Levi subgroup of some parabolic subgroup of $G$.  Recall that a homomorphism $f\colon G_1\to G_2$ of connected algebraic groups is a \emph{central isogeny} if $f$ is surjective, $\ker(f)$ is finite and the kernel of the derivative $df$ is central in $\Lie(G_1)$.

Let $Y(G) = \Hom(\BBG_m,G)$ denote the set of cocharacters of $G$. 
For $\mu\in Y(G)$ 
and $g\in G$ we define the \emph{conjugate cocharacter}
$g\cdot \mu \in Y(G)$ by 
$(g\cdot \mu)(t) = g\mu(t)g\inverse$ for $t \in \BBG_m$; 
this gives a left action of $G$ on $Y(G)$.
For $H$ a subgroup of $G$, let $Y(H): = Y(H^\circ) = \Hom(\BBG_m,H)$
denote the set of cocharacters of $H$. There is an obvious 
inclusion $Y(H) \subseteq Y(G)$.

Fix a Borel subgroup $B$ of $G$ containing a maximal torus $T$.
Let $\Phi = \Phi(G,T)$ be the root system of $G$ with respect to $T$,  
let $\Phi^+ = \Phi(B,T)$ be the set of positive roots of $G$, and let 
$\Sigma = \Sigma(G, T)$   
be the set of simple roots of $\Phi^+$. 
For each $\alpha\in \Phi$ we have a root subgroup $U_\alpha$ of $G$. For $\alpha$ in $\Phi$, let $x_\alpha : \BBG_a \to U_\alpha$ be a parametrization of the root subgroup $U_\alpha$ of $G$.

We denote the unipotent variety of $G$ by $\UU_G$ and the nilpotent cone of $\frakg$ by $\NN_G$.  We define
$$ \UU^{(1)}_G= \{u\in \UU_G \mid u^p= 1\} $$
and
$$ \NN^{(1)}_G= \{X\in \NN_G \mid  X^{[p]}= 0\}.$$

If $u\in \UU_G$ then we have a unique decomposition $u= u_1\cdots u_r$, where $u_i\in G_i$ and the $G_i$ are the simple factors of $\DD G$; we call $u_i$ the \emph{projection of $u$ onto $G_i$}.  Clearly $u$ is distinguished in $G$ if and only if $u_i$ is distinguished in $G_i$ for each $i$.

\subsection{Good primes}
\label{sec:goodprimes}
A prime $p$ is said to be \emph{good} for $G$
if it does not divide any coefficient of any positive root when expressed as a linear combination of simple ones. Else $p$ is called \emph{bad} for $G$, \cite[\S 4]{springersteinberg}.
Explicitly, if $G$ is simple, $p$ is \emph{good} for $G$ provided $p > 2$ in case $G$ is of Dynkin type $B_n$, $C_n$, or $D_n$; $p > 3$ in case $G$ is of Dynkin type $E_6$, $E_7$, $F_4$ or $G_2$ and $p > 5$ in case $G$ is of type $E_8$.  If $G$ is semisimple then we say that $p$ is \emph{separably good} for $G$ if $p$ is good for $G$ and the canonical map from $G_{\rm sc}$ to $G$ is separable, where $G_{\rm sc}$ is the simply connected cover of $G$.  For arbitrary connected reductive $G$ we say that $p$ is \emph{separably good} for $G$ if it is separably good for $[G, G]$.  We observe that if $L$ is a Levi subgroup of $G$ and $p$ is good for $G$, then it is also good for $L$.

\subsection{Steinberg endomorphisms of $G$}
\label{sec:steinberg}

Recall that a \emph{Steinberg endomorphism} of $G$ is a surjective homomorphism $\sigma:G\to G$ such that the 
corresponding fixed point subgroup $G_\sigma :=\{g \in G \mid \sigma(g) = g\}$ of $G$ is finite.
Frobenius endomorphisms $\sigma_q$ of reductive groups over finite fields are familiar examples, giving rise to \emph{finite groups of Lie type} $G(q)$.  See 
Steinberg \cite{steinberg:end} for a detailed discussion
(for this terminology, see \cite[Def.\ 1.15.1b]{GLS:1998}). 
The set of all Steinberg endomorphisms of $G$ is a subset of the set of all 
isogenies $G\rightarrow G$ (see \cite[7.1(a)]{steinberg:end}) 
which encompasses in particular all generalized Frobenius endomorphisms, 
i.e., endomorphisms of $G$ some power of which are Frobenius endomorphisms 
corresponding to some $\BBF_q$-rational structure on $G$. 
In that case we also denote the finite group of Lie type $G_\sigma$ by $G(q)$.
If $\SS$ is a $\sigma$-stable set of closed subgroups of $G$, then $\SS_\sigma$ denotes the subset consisting of all  $\sigma$-stable members of $\SS$.

If $\sigma_q:G\to G$ is a standard $q$-power Frobenius endomorphism of $G$, then there exist a $\sigma_q$-stable
maximal torus $T$ and Borel subgroup $B\supseteq T$, and with respect to a chosen parametrisation of
the root groups as above, we have $\sigma_q(x_\alpha(t)) = x_\alpha(t^q)$ for each $\alpha\in \Phi$ and $t\in \BBG_a$, see \cite[Thm.~1.15.4(a)]{GLS:1998}.
Following \cite{PST}, we call a generalized Frobenius endomorphism $\sigma$
a \emph{$q$-Frobenius endomorphism} provided $\sigma = \tau \sigma_q$, where $\tau$ is an algebraic automorphism of $G$ of finite order, $\sigma_q$ is a standard $q$-power Frobenius endomorphism of $G$, and $\sigma_q$ and $\tau$ commute.  When $p$ is bad for $G$, a $q$-Frobenius endomorphism
does not involve a twisted Steinberg endomorphism, see \cite[\S 3]{PST}. 
If $G$ is simple and $p$ is good for $G$, then any Steinberg endomorphism of $G$ is a $q$-Frobenius endomorphism, 
see \cite[\S 11]{steinberg:end}.
If $G$ is not simple and $p$ is bad for $G$, then a generalized Frobenius map may fail to factor into
a field and algebraic automorphism of $G$, e.g., see \cite[Ex.~1.3]{HRG}.

\subsection{Bala-Carter Theory}
\label{sub:balacarter}

We recall some relevant results and concepts from Bala-Carter theory. Suppose $p$ is good for $G$.
A parabolic subgroup $P$ of $G$ admits a dense open orbit on its unipotent radical $R_u(P)$, the so-called \emph{Richardson orbit}; see \cite[Thm.~5.2.1]{carter:book}.
A parabolic subgroup $P$ of $G$ is called \emph{distinguished} provided $\dim (\DD P/R_u(P)) = \dim (R_u(P)/\DD R_u(P))$, see \cite[\S 2.1]{premet}.
For $G$ simple, the distinguished parabolic subgroups of $G$ (up to $G$-conjugacy) were 
worked out in \cite{BaCaI} and \cite{BaCaII}; see 
\cite[pp.\ 174--177]{carter:book}.
The notion of a distinguished parabolic subgroup of $G$ also makes  sense in case $p$ is bad for $G$, cf.~\cite[\S 4.10]{Jantzen}. 

The following is the celebrated Bala-Carter Theorem, see \cite[Thm.~5.9.5, Thm.~5.9.6]{carter:book}, which is valid in good characteristic, thanks to work of Pommerening \cite{pommereningI}, \cite{pommereningII}. For the Lie algebra versions see also 
\cite[Prop.~4.7, Thm.~4.13]{Jantzen}. 

\begin{thm}\label{thm:bala-carter}
	Suppose $p$ is good for $G$. 
	\begin{itemize}
		\item [(i)] 	There is a bijective map between the $G$-conjugacy classes of distinguished unipotent elements of $G$ and conjugacy classes of distinguished parabolic subgroups of $G$. The unipotent class corresponding to a given parabolic subgroup $P$ contains the dense $P$-orbit on $R_u(P)$. 
		\item [(ii)] 	There is a bijective map between the $G$-conjugacy classes of unipotent elements of $G$ and conjugacy classes of pairs $(L, P)$, where $L$ is a Levi subgroup of $G$ and $P$ is a 
		distinguished parabolic subgroup of $\DD L$. The unipotent class corresponding to the pair $(L, P)$ contains the dense $P$-orbit on $R_u(P)$. 
	\end{itemize}
\end{thm}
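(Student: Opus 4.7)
The plan is to reduce (ii) to (i) and then prove (i) via the theory of associated cocharacters.

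\emph{Reduction of (ii) to (i).} Given a unipotent $u \in G$, I would choose a maximal torus $T'$ of $C_G(u)^\circ$ and set $L = C_G(T')$; this is a Levi subgroup of $G$ containing $u$. Any torus $S \le C_L(u)$ commutes with $T'$ (since $S \le L = C_G(T')$), so $S \cdot T'$ is a torus in $C_G(u)^\circ$, forcing $S \le T' \le Z(L)^\circ$ by maximality of $T'$. Hence $C_{\DD L}(u)$ contains no nontrivial torus, so $u$ is distinguished in $\DD L$. Part (i), applied inside $\DD L$, yields a distinguished parabolic $P \subseteq \DD L$ whose dense orbit on $R_u(P)$ meets the $G$-class of $u$. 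The pair $(L,P)$ is canonically attached up to $G$-conjugacy: $L$ is recovered as $C_G$ of a maximal torus of $C_G(u)^\circ$, and then (i) pins down $P$ up to $\DD L$-conjugacy.

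\emph{Proof of (i).} Pass to the Lie algebra via a Springer isomorphism (available in good characteristic after the standard reduction to simply connected $\DD G$), so the task becomes bijecting distinguished nilpotent $G$-orbits in $\frakg$ with $G$-conjugacy classes of distinguished parabolic subgroups. Given a distinguished nilpotent $e \in \frakg$, I invoke Pommerening's theorem to obtain an associated cocharacter $\tau : \BBG_m \to G$ with $\Ad(\tau(t)) e = t^2 e$. Writing $\frakg(\tau, i)$ for the $i$-weight space of $\Ad \circ \tau$, define $P(\tau)$ to be the parabolic with $\Lie P(\tau) = \bigoplus_{i \ge 0} \frakg(\tau, i)$ and Levi factor $L(\tau) = C_G(\tau)$. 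Two verifications remain: (a) $e$ lies in the open $P(\tau)$-orbit on $R_u(P(\tau))$, by a dimension count using $\dim C_G(e) = \dim \frakg(\tau, 0) - \dim Z(\frakg)$ (a consequence of distinguishedness); and (b) $P(\tau)$ is distinguished in the sense of \S\ref{sub:balacarter}, which reduces to the equality $\dim \frakg(\tau, 0) - \dim Z(\frakg) = \dim \frakg(\tau, 2)$ of weight-space dimensions. For the inverse assignment, take a distinguished parabolic $P$, let $e$ represent the Richardson orbit on $R_u(P)$, and verify $e$ is distinguished in $G$ by running the numerical identities in reverse; the two maps are mutually inverse up to conjugacy.

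\emph{Main obstacle.} The hard step is Pommerening's construction of the associated cocharacter $\tau$ in good but possibly small characteristic, where an $\mathfrak{sl}_2$-triple through $e$ may fail to exist. In characteristic zero, Jacobson--Morozov furnishes an $\SL_2$-subgroup containing $e$ and $\tau$ is simply its diagonal $\BBG_m$; in good characteristic, Pommerening's substitute is needed, together with a cohomological Richardson-type argument to establish uniqueness of $\tau$ up to $C_G(e)^\circ$-conjugacy. Once $\tau$ is in hand with the correct functorial properties, the dimension-theoretic verifications in (a) and (b) are uniform across all good primes, so the weight of the argument rests entirely on the Pommerening input.
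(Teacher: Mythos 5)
The paper does not prove this theorem at all: it is quoted as background, with references to Carter's book (Thm.~5.9.5, 5.9.6), Pommerening's papers, and Premet/Jantzen for the good-characteristic and Lie-algebra versions. So there is no internal proof to compare with; what you have written is a sketch of the standard Pommerening--Premet route via associated cocharacters, which is indeed the argument the cited sources carry out (and is consistent with how the surrounding paper uses the machinery of \S\ref{sub:cochars}).

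Judged as a proof, however, the sketch has real gaps. First, the step you label as the ``main obstacle'' is not an obstacle you overcome but the entire content of the theorem in good characteristic: the existence (and $C_G(e)^{\circ}$-conjugacy) of cocharacters associated to a distinguished nilpotent $e$ when $p$ is good is precisely Pommerening's/Premet's theorem, and Premet's construction of these optimal cocharacters is where the Bala--Carter classification is actually established; invoking it as a black box reduces the theorem essentially to itself, so the proposal should be presented as a derivation from that input, not as a proof of it. Second, the dimension count in (a) is off: for distinguished $e$ the grading by $\tau$ is even (this needs proof) and the correct identities are $\dim C_G(e)=\dim\frakg(\tau,0)$ (using smoothness of centralizers in good characteristic to pass between $C_G(e)$ and $\mathfrak{c}_{\frakg}(e)$, plus the formula $\dim \mathfrak{c}_{\frakg}(e)=\dim\frakg(\tau,0)+\dim\frakg(\tau,1)$), while $\dim\frakg(\tau,2)=\dim\frakg(\tau,0)-\dim Z(\frakg)$; your formula $\dim C_G(e)=\dim\frakg(\tau,0)-\dim Z(\frakg)$ conflates the two and, taken literally, would make the $P(\tau)$-orbit of $e$ larger than $R_u(P(\tau))$. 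Third, the inverse direction --- that the Richardson element of an arbitrary distinguished parabolic is distinguished in $G$, and that the two assignments are injective on conjugacy classes (via the fact that $P(\tau)$ depends only on $e$, as in Proposition~\ref{prop:cochar1}(iii), and uniqueness of the Richardson orbit) --- is asserted with ``run the identities in reverse'' rather than argued. The reduction of (ii) to (i) via $L=C_G(T')$ for $T'$ a maximal torus of $C_G(u)^{\circ}$ is fine and matches Remark~\ref{rem:sigmastablelevi}(i), though the well-definedness of the class of the pair $(L,P)$ also uses conjugacy of maximal tori of $C_G(u)^{\circ}$, which deserves an explicit sentence.
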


\begin{rem}
\label{rem:sigmastablelevi}
(i). Let $1 \ne u \in \UU_G$. 
Let $S$ be a maximal torus of $C_G(u)$. Then $u$ is distinguished 
in the Levi subgroup $C_G(S)$ of $G$, since $S$ is the unique maximal torus of 
$C_{C_G(S)}(u)$. 
Conversely, 
if $L$ is a Levi subgroup of $G$ with $u$ distinguished in $L$,
then the connected center of $L$ is a maximal torus of $C_G(u)^\circ$, see \cite[Rem.\ 4.7]{Jantzen}.

(ii). Let $\sigma : G \to G$ be a Steinberg endomorphism of $G$ and let $1 \ne u \in G_\sigma$ be unipotent. Then $C_G(u)^\circ$ is $\sigma$-stable. The set of all maximal tori of $C_G(u)^\circ$ is 
$\sigma$-stable and $C_G(u)^\circ$ is transitive on that set, \cite[Thm.~6.4.1]{springer}. Thus the Lang-Steinberg Theorem, see \cite[I 2.7]{springersteinberg}, provides a $\sigma$-stable maximal torus, say $S$, of $C_G(u)^\circ$. Then, by part (i), $L = C_G(S)$ is a $\sigma$-stable Levi subgroup of $G$ and $u$ is distinguished in $L$.  
\end{rem}

\subsection{Cocharacters and parabolic subgroups of $G$}
\label{sub:parabolics}

Let $\lambda \in Y(G)$. 
Recall that $\lambda$ affords a 
$\ZZ$-grading on $\frakg = \bigoplus_{j \in \ZZ}\frakg(j, \lambda)$, where $\frakg(j, \lambda) := \{X \in \frakg \mid \Ad(\lambda(t))X = t^jX 
\text{ for every } t \in \BBG_m\}$ 
is the $j$-weight space of $\Ad(\lambda(\BBG_m))$ on $\frakg$, 
see 
\cite[\S 5.5]{carter:book} or \cite[\S 5.1]{Jantzen}.
Let $\frakp_\lambda: = \bigoplus_{j \ge 0}\frakg(j, \lambda)$.
Then there is a unique parabolic subgroup $P_\lambda$ with 
$\Lie(P_\lambda) = \frakp_\lambda$ and $C_G(\lambda) := C_G(\lambda(\BBG_m))$ is a Levi subgroup of $P_\lambda$.
Since all maximal tori in $G$ are conjugate, 
it suffices to describe these subgroups and subalgebras when $\lambda \in Y(T)$ for our fixed maximal torus $T$.
In this case, letting $X(T) = \Hom(T,\BBG_m)$ denote the character group of $T$,  we have $U_\alpha\subseteq P_\lambda$ 
if and only if $\langle\lambda,\alpha\rangle \geq 0$, where $\langle \, ,\, \rangle:Y(T)\times X(T)\to \ZZ$ is the usual pairing between cocharacters and characters.
We have $U_\alpha\subseteq C_G(\lambda)$ if and only if $\langle\lambda,\alpha\rangle = 0$,
and $R_u(P_\lambda)$ is generated by the $U_\alpha$ with $\langle\lambda,\alpha\rangle >0$; see the proof of \cite[Prop.~8.4.5]{springer}.

Set $J:=\{\alpha\in\Sigma \mid\langle\alpha,\lambda\rangle=0\}$.
Then $P_\lambda=P_J=\big\langle T,U_\alpha \mid \langle\alpha,\lambda\rangle\geq0 \big\rangle$ is the \emph{standard parabolic subgroup} of $G$ associated with  $J \subseteq \Sigma$.

Let $\rho = \sum_{\alpha \in \Sigma} c_{\alpha\rho} \alpha$
be the highest root in $\Phi^+$. Define $\hgt_J(\rho):= \sum_{\alpha \in \Sigma\setminus J} c_{\alpha\rho}$.
In view of Theorem \ref{thm:bala-carter}, 
the following gives the order of a distinguished unipotent element in good characteristic.

\begin{lem}[{\cite[Order Formula 0.4]{testerman}}]
	\label{lem:order}	
	Suppose $p$ is good for $G$. Let $P = P_J$ be a distinguished parabolic subgroup of $G$ and let $u$ be in the Richardson orbit of $P$ on $R_u(P)$. Then the order of $u$ is $\min\{p^a \mid p^a > \hgt_J(\rho)\}$.
\end{lem}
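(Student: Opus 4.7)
The plan is to combine the Bala--Carter correspondence with a direct computation of $u^{p^a}$ using the root-subgroup decomposition of $R_u(P_J)$. By Theorem~\ref{thm:bala-carter}(i) and the discussion in Section~\ref{sub:parabolics}, associated to $u$ is a cocharacter $\lambda \in Y(G)$ (the Bala--Carter cocharacter), normalised so that $\langle\lambda,\alpha\rangle = 2$ for $\alpha \in \Sigma\setminus J$ and $\langle\lambda,\alpha\rangle = 0$ for $\alpha \in J$. Then $P_\lambda = P_J$, and $u$ lies in the dense $P_\lambda$-orbit on $R_u(P_\lambda) = \prod_{\beta \in \Phi^+,\,\langle\lambda,\beta\rangle > 0} U_\beta$, with a ``generic'' non-zero component at the minimal $\lambda$-weight level $\langle\lambda,\beta\rangle = 2$.

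Next, I would compute $u^{p^a}$ using the Chevalley commutator formula applied iteratively inside $R_u(P_\lambda)$. The key observation is that in characteristic $p$, $p$-th powers shift elements into subgroups of strictly higher $\lambda$-weight: writing $U^{(i)} := \prod_{\beta:\,\langle\lambda,\beta\rangle \ge i} U_\beta$, one has the implication $v \in U^{(i)} \Rightarrow v^p \in U^{(ip)}$. Indeed, in the expansion of $v^p$ in root subgroups the ``diagonal'' contributions $x_\beta(c)^p = x_\beta(pc)$ vanish in characteristic $p$, leaving only commutator corrections, which live deeper in the weight filtration. Starting from $u \in U^{(2)}$, iteration yields $u^{p^a} \in U^{(2p^a)}$.

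Since the maximum $\lambda$-weight on $\Phi^+$ is $\langle\lambda,\rho\rangle = 2\hgt_J(\rho)$, the subgroup $U^{(2p^a)}$ is trivial as soon as $2p^a > 2\hgt_J(\rho)$, i.e.\ $p^a > \hgt_J(\rho)$; this gives the upper bound on the order of $u$. For the lower bound, when $p^a \le \hgt_J(\rho)$ there exists at least one root $\beta \in \Phi^+$ of $\lambda$-height exactly $2p^a$ (for instance, a root descending from $\rho$), and one then needs to show that by the genericity of $u$ the iterated commutator contributions accumulate to a non-zero element of $U_\beta$, so that $u^{p^a} \ne 1$. Combining the two bounds yields the claimed formula for the order of $u$.

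The main obstacle is this lower-bound step: one must propagate the Richardson/generic nature of $u$ through successive $p$-th powers and ensure that the relevant structure constants of the Chevalley commutator formula are non-zero modulo $p$. This is precisely where the goodness of $p$ for $G$ enters, guaranteeing that no accidental cancellations occur along the $\lambda$-string linking the initial simple-root support of $u$ to the highest root $\rho$.
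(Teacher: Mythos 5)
The paper itself gives no proof of this lemma: it is imported verbatim from Testerman (Order Formula 0.4 of \cite{testerman}), so your proposal has to be judged on its own merits rather than against an internal argument. As it stands it has a genuine gap, and it is located exactly where you flag ``the main obstacle''. Even your upper bound is under-justified: from $[U^{(i)},U^{(j)}]\subseteq U^{(i+j)}$ and $x_\beta(c)^p=x_\beta(pc)=1$ one only gets that $U^{(i)}/U^{(2i)}$ is abelian of exponent $p$, hence $v^p\in U^{(2i)}$ --- not the claimed $v^p\in U^{(ip)}$. The stronger statement is true, but needs a different input: e.g.\ pass to a faithful $\lambda$-graded module, note that $N=u-1$ raises weights by at least $2$, use $u^{p^a}=1+N^{p^a}$ (all intermediate binomial coefficients $\binom{p^a}{j}$ vanish mod $p$), and then translate back into the root-group filtration; that repairs the bound $p^a>\hgt_J(\rho)\Rightarrow u^{p^a}=1$.

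The lower bound, however, is not proved at all. Saying that ``by genericity the iterated commutator contributions accumulate to a non-zero element'' and that good $p$ prevents ``accidental cancellations'' is not an argument: the coefficient of $u^{p^a}$ on a root of $\lambda$-weight $2p^a$ is a specific polynomial in the structure constants, the coordinates of the Richardson representative, and multinomial coefficients (many of which vanish mod $p$), and nothing you have written shows it is non-zero. Individual structure constants being non-zero in good characteristic does not preclude cancellation in these sums. Equivalently, you would need to show that whenever $p^a\le\hgt_J(\rho)$ some Jordan block of $u$ on a faithful module has size exceeding $p^a$; this is precisely the non-trivial content of the order formula, and in the literature it rests on Pommerening's good-characteristic Bala--Carter theory (so that the Richardson element carries the characteristic-zero weighted Dynkin diagram) together with concrete control of Jordan block sizes, not on a soft genericity principle. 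So the outline identifies the right framework (the cocharacter $\lambda$ with weights $0,2$ on $\Sigma$ and the weight filtration of $R_u(P_J)$), but half of the proof --- the half that makes the formula an equality rather than an upper bound --- is missing.
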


\subsection{Overgroups of type $A_1$}
\label{subsec:A1s}
It has been understood for some time now that if $p$ is good for $G$ then one can study unipotent elements of $G$ having order $p$ by embedding them in $A_1$ subgroups of $G$.  The existence of $A_1$ overgroups for unipotent elements of order $p$ is guaranteed by the following fundamental results of Testerman \cite[Thm.~0.1]{testerman} if $p$ is good for $G$ and else by Proud-Saxl-Testerman \cite{PST}; these results were originally proved for semisimple $G$ but the extension to arbitrary connected reductive $G$ is immediate. 

\begin{thm}[{\cite[Thm.~0.1, Thm.~0.2]{testerman}}]
	\label{thm:A1pgood}
	Suppose $p$ is good for $G$.
	Let $\sigma$ be $\id_G$ or a Steinberg endomorphism of $G$. Let $u \in G_\sigma$ be unipotent of order $p$. Then there exists a $\sigma$-stable subgroup of $G$ of type $A_1$ containing $u$.
\end{thm}

The proof of Theorem \ref{thm:A1pgood} is based on case-by-case checks and depends in part on computer calculations involving  explicit unipotent class representatives. For a uniform proof of the theorem, we refer the reader to McNinch \cite{mcninch1}.  Conditions to ensure $G$-complete reducibility of such a subgroup were given in \cite{McNT}.

We now consider $A_1$ overgroups of distinguished unipotent elements in arbitrary characteristic.  There are only a few instances when $G$ is simple, $p$ is bad for $G$, and $G$ admits a distinguished unipotent element of order $p$.  We recall the relevant results concerning the existence of $A_1$ overgroups of such elements from \cite{PST}.

\begin{lem}[{\cite[Lem.~4.1]{PST}}]
	\label{lem:pst41}
	Let $G$ be simple classical of type $B_l, C_l$, or $D_l$ and suppose $p = 2$.
	Then $G$ admits a distinguished involution $u$ if and only if $G$ is of type $C_2$ and $u$ belongs to the subregular class $\mathscr C$ of $G$.
	If $\sigma$ is $\id_G$ or a $q$-Frobenius  endomorphism of $G$ and $u \in \mathscr C \cap G_\sigma$, then there exists a $\sigma$-stable subgroup $A$ of $G$ of type $A_1$ containing $u$.
\end{lem}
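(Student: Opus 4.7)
My plan proceeds in two parts. To establish the classification statement, I would first enumerate the distinguished unipotent classes of $G$ in characteristic $2$ using the Bala--Carter--Pommerening correspondence in its bad-characteristic analogue. A class is distinguished precisely when the connected centralizer contains no non-central torus. An involution in a classical group in characteristic $2$ has Jordan blocks of size at most $2$ on the natural module $V$, and I would determine which such Jordan structures yield distinguished classes.

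For $G$ of type $B_l$ or $D_l$, a direct analysis of the centralizer of such an involution (for instance using the action on an orthogonal direct sum decomposition of $V$ respecting the Jordan decomposition) exhibits a non-central torus in $C_G(u)$, ruling out distinguished involutions. For $G = \Sp_{2l}$ with $l \geq 3$, a parallel analysis using the classification of nilpotent orbits in $\mathfrak{sp}_{2l}$ in characteristic $2$ (due to Hesselink and Spaltenstein) again produces a centralising torus for every involution. In $G = \Sp_4$, however, the class $\mathscr C$ of Jordan type $(2,2)$ with appropriate bad-characteristic decoration has purely unipotent connected centralizer; a direct computation with explicit root-subgroup representatives confirms this and identifies $\mathscr C$ as the unique distinguished class of involutions.

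For the $A_1$-overgroup assertion, I would work explicitly in $G = \Sp_4$ with $p=2$. Using root-subgroup data for $C_2$, one writes down a representative $u$ of $\mathscr C$ and exhibits a closed connected subgroup $A$ of $G$ of type $A_1$ containing $u$, for example the image of a principal homomorphism $\SL_2 \to \Sp_4$ arising from the $4$-dimensional irreducible symplectic representation of $\SL_2$; with respect to a cocharacter $\lambda \in \Omega^a_G(u)$ with $\lambda(\BBG_m)\leq A$, the element $u$ appears as the image of a standard unipotent generator. Conjugacy of $\mathscr C$ in $G$ then places every element of $\mathscr C$ in some type-$A_1$ subgroup of $G$.

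For the $\sigma$-stability refinement, fix $u \in \mathscr C \cap G_\sigma$ and let $\mathcal A(u)$ denote the set of subgroups of $G$ of type $A_1$ containing $u$. The previous step gives $\mathcal A(u) \ne \varnothing$, and $\mathcal A(u)$ is $\sigma$-stable because $u \in G_\sigma$. The $\sigma$-stable reductive group $C_G(u)^\circ$ acts on $\mathcal A(u)$; a short analysis shows the action is transitive with connected stabilisers, so the Lang--Steinberg theorem yields a $\sigma$-fixed element $A \in \mathcal A(u)$. The main obstacle is the char-$2$ centralizer analysis identifying $\mathscr C$ as the unique source of distinguished involutions across the classical types, since the bad-characteristic refinements of Bala--Carter theory produce additional decorations requiring careful case-by-case verification; once this is in hand, the explicit $A_1$-construction and the Lang--Steinberg step are essentially routine.
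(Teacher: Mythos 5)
The paper does not prove this lemma at all: it is quoted directly from Proud--Saxl--Testerman \cite[Lem.~4.1]{PST}, so your argument has to stand on its own. The classification half of your plan (blocks of size at most $2$, centralizer analysis across types $B_l$, $C_l$, $D_l$, with $\Sp_4$ singled out) is a reasonable outline and consistent with the known facts, though as written it is a roadmap rather than a proof. The genuine gap is in your last step: you claim that $C_G(u)^\circ$ acts transitively on the set of \emph{all} type-$A_1$ overgroups of $u$ and then invoke Lang--Steinberg. That transitivity is false. As recorded in Examples \ref{ex:c2} and \ref{ex:goodA1} (following Korhonen, Prop.~1.2), a subregular involution $u$ of $G=\Sp(V)$, $\dim V=4$, $p=2$, lies in $A_1$ subgroups from two distinct $G$-conjugacy classes, namely images of $\SL_2(k)$ acting on $V\cong E\perp E$ and on $V\cong E\otimes E$ (one a good $A_1$, the other bad; one isomorphic to $\SL_2(k)$, the other to $\PSL_2(k)$). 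After conjugating both to contain the fixed $u$, these cannot be fused by $C_G(u)^\circ$, so the homogeneous-space structure you invoke does not exist and Lang--Steinberg cannot be applied to the full set of $A_1$ overgroups. (Connectedness of stabilisers is not actually needed for that argument; transitivity is, and it fails.)

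To repair the step you would have to restrict to one intrinsically defined, hence $\sigma$-stable, family of $A_1$ overgroups (say those acting on $V$ as $E\perp E$) and prove that $C_G(u)^\circ$ or $R_u(C_G(u))$ is transitive on that family; but this is exactly the analogue of Seitz's conjugacy theorem, Theorem \ref{thm:seitz1}(ii), which is proved only for $p$ good, so in characteristic $2$ it requires its own argument --- in effect this is what \cite{PST} supply by an explicit construction of a $\sigma$-stable $A_1$. A smaller point: calling your map a principal homomorphism is a misnomer, since regular unipotent elements of $\Sp_4$ in characteristic $2$ have order $4$ and no principal $A_1$ exists; the $4$-dimensional irreducible $E\otimes E^{(2)}$ does give an $A_1$ whose nontrivial unipotent elements have Jordan type $(2,2)$, but you must still check, via the characteristic-$2$ invariant separating the two Jordan-type-$(2,2)$ classes, that its unipotent elements land in the distinguished class $\mathscr C$ rather than in the non-distinguished one.
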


\begin{ex}
	\label{ex:c2}
	Let $G$ be simple of 
	type $C_2$ and let $p = 2$.  Let $\sigma$ be $\id_G$ or a $q$-Frobenius  endomorphism of $G$, and suppose $u\in G_\sigma$ is a distinguished unipotent element of order $2$.
  Then Lemma \ref{lem:pst41} provides a $\sigma$-stable subgroup $A$ of type $A_1$ containing $u$. Thanks to \cite[Prop.~1.2]{korhonen}, there are such subgroups $A$ which are not $G$-ir.
	In fact, according to \emph{loc.~cit.},  
	there are two $G$-conjugacy classes of such $A_1$ subgroups in $G$; 
see Example \ref{ex:goodA1} below.
Since $A$ is contained in a proper parabolic subgroup of $G$, so is $A_\sigma$. So the latter is also not $G$-ir.  By Lemma~\ref{lem:GcrGir;dist} below, $A$ and $A_\sigma$ are not $G$-cr, either.
\end{ex}

\begin{lem}[{\cite[Lem.~3.3, Lem.~4.2]{PST}}]
	\label{lem:pst42}
	Let $G$ be simple of exceptional type and suppose $p$ is bad for $G$.
	Then $G$ admits a distinguished unipotent element $u$ of order $p$ if and only if $G$ is of type $G_2$, $p = 3$,  and $u$ belongs either to the subregular class $G_2(a_1)$\footnote{Throughout, we use the Bala-Carter notation for distinguished classes in the exceptional groups, see \cite[\S 5.9]{carter:book}.}  or to the class $A_1^{(3)}$ of $G$.
	Moreover, if $\sigma$ is $\id_G$ or a $q$-Frobenius  endomorphism of $G$ and $u \in G_2(a_1) \cap G_\sigma$, then there exists a $\sigma$-stable subgroup $A$ of $G$ of type $A_1$ containing $u$.
	In case $u \in A_1^{(3)}$, there is no overgroup of $u$ in $G$ of type $A_1$.
\end{lem}

\begin{ex}
	\label{ex:g2}
	Let $G$ be simple of type $G_2$ and $p = 3$. Let $H$ be a reductive subgroup of $G$
	containing a distinguished unipotent element $u$ from $G$. 
	Then, as $p = 3 = a(G_2)$, it follows from Theorem \ref{thm:dist} that
	$H^\circ$ is $G$-ir, and so is $H$. 	This applies in particular to 
	the  subgroup  $A$  of $G$ of type $A_1$ 
	containing $u$ when $u\in G_2(a_1)$.
	Since $3$ is not a good prime for $G$, Theorem \ref{thm:korhonen} does not apply in this case.
			See also \cite[Cor.~2]{stewart:g2}.
			
	In case of the presence of a $q$-Frobenius endomorphism of $G$ stabilizing $H$, 
	we show in our proof of Theorem \ref{thm:korhonen-bad-finite} that
	$H_\sigma$ is also $G$-ir.
\end{ex}

\begin{thm}[{\cite[Thm.~5.1]{PST}}]
	\label{thm:A1pbad}
	Let $G$ be semisimple and suppose $p$ is bad for $G$. 
 Let $\sigma$ be $\id_G$ or a $q$-Frobenius  endomorphism of $G$. Let $u \in G_\sigma$ be unipotent of order $p$.
	If $p = 3$, and $G$ has a simple component of type $G_2$, assume that the projection of $u$ into this component does not lie in the class $A_1^{(3)}$.  Then there exists a $\sigma$-stable subgroup of $G$ of type $A_1$ containing~$u$.
\end{thm}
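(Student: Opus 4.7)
The plan is to reduce $u$ to a distinguished element in a $\sigma$-stable Levi subgroup of $G$ and then split the analysis on whether $p$ is good for that Levi. For the preliminary reduction to the case of $G$ simple, note that a $\sigma$-stable semisimple $G$ is a central product of its simple components grouped into $\sigma$-orbits; an $A_1$-overgroup of $u$ in $G$ can be built as the image of a diagonal embedding across an orbit, using an $A_1$ in each simple component compatible with the corresponding projection of $u$. So it suffices to treat the case where $G$ is simple, with the general statement assembled from the per-component data and the $\sigma$-action on components.

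Now apply Remark \ref{rem:sigmastablelevi}(ii) to $u \in G_\sigma$ to obtain a $\sigma$-stable Levi subgroup $L \leq G$ in which $u$ is distinguished. Since $u$ is unipotent we have $u \in \DD L$, and $\sigma|_{\DD L}$ is again a Steinberg (in fact $q$-Frobenius) endomorphism by the discussion in \S \ref{sec:steinberg}. If $p$ happens to be good for $L$, then Testerman's Theorem \ref{thm:A1pgood} applied to the semisimple group $\DD L$ directly produces a $\sigma$-stable $A_1 \leq \DD L \leq G$ containing $u$, and we are done.

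The remaining case is when $p$ is bad for $L$. Then some simple component $L_0$ of $\DD L$ has $p$ bad, and the projection $u_0$ of $u$ into $L_0$ is distinguished in $L_0$; it is nontrivial, for otherwise a maximal torus of $L_0$ would centralise $u$ without lying in the centre of $L$, contradicting the assumption that $u$ is distinguished in $L$. Hence $u_0$ has order exactly $p$. The classification of distinguished unipotent elements of order $p$ in bad characteristic, recorded in Lemmas \ref{lem:pst41} and \ref{lem:pst42}, restricts the possibilities to: $L_0$ of type $C_2$ with $p=2$ and $u_0$ subregular, or $L_0$ of type $G_2$ with $p=3$ and $u_0 \in G_2(a_1) \cup A_1^{(3)}$. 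The class $A_1^{(3)}$ is excluded by the hypothesis of the theorem applied to the relevant $G_2$ component, and each of the remaining cases is handled by the cited lemma, which already supplies a $\sigma$-stable $A_1 \leq L_0$ containing $u_0$. Combining this $A_1$ with $A_1$-overgroups in the other simple components of $\DD L$ (which are available via Theorem \ref{thm:A1pgood}, since $p$ is good for each of those components) through the diagonal construction of the first step produces the desired $\sigma$-stable $A_1$-overgroup of $u$ in $L \leq G$.

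The principal obstacle is the classification step underlying Lemmas \ref{lem:pst41} and \ref{lem:pst42}: since the order formula of Lemma \ref{lem:order} is unavailable in bad characteristic, one must inspect the explicit tables of distinguished unipotent classes in classical groups at $p=2$ and in exceptional groups at $p \in \{2,3,5\}$ and check the orders by hand, and then exhibit the $A_1$-overgroup explicitly as an $\SL_2$-embedding associated to the weighted Dynkin diagram of $u_0$. The $\sigma$-stability in each such explicit case comes from a standard Lang--Steinberg argument applied to the conjugacy action of $C_{L_0}(u_0)^\circ$ on the set of $A_1$-overgroups of $u_0$.
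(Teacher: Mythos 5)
First, for calibration: the paper does not prove this statement at all --- it is quoted from \cite[Thm.~5.1]{PST} --- so your proposal has to stand on its own, and its overall strategy (pass to a $\sigma$-stable Levi via Remark \ref{rem:sigmastablelevi}(ii), split on whether $p$ is good for $L$, and use Lemmas \ref{lem:pst41}--\ref{lem:pst42} to pin down the bad components) is sensible. The genuine gap is the $\sigma$-stability of the $A_1$ you assemble from per-component data, which you use twice: in the opening reduction to $G$ simple and in the final assembly inside $\DD L$. If $\sigma$ permutes simple components $G_1,\dots,G_r$ cyclically and $A\le G_1$ is a $\sigma^r$-stable $A_1$ through $u_1$, the twisted diagonal $D=\{(a,\sigma(a),\dots,\sigma^{r-1}(a)) \mid a\in A\}$ is \emph{not} $\sigma$-stable: one computes $\sigma(D)=\{(\sigma^{r}(a),\sigma(a),\dots,\sigma^{r-1}(a)) \mid a \in A\}$, which equals $D$ only when $\sigma^{r}$ fixes $A$ pointwise. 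The same problem arises when $\sigma$ stabilises each component of $\DD L$ but acts on them with different twists: a diagonal $A_1$ through $(u_1,\dots,u_m)$ certainly exists, but arranging it to be $\sigma$-stable is exactly the nontrivial content. Already for two $A_1$-factors interchanged by $\sigma$, with $u$ corresponding to a unipotent element of $\SL_2(q^2)$, one has to produce $h$ with $hu_1h\inverse=\sigma(u_1)$ \emph{and} $h\sigma(h)$ central --- a twisted Lang-type condition, not a formal consequence of having an $A_1$ in each factor.

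Your closing remark that $\sigma$-stability ``comes from a standard Lang--Steinberg argument applied to the conjugacy action of $C_{L_0}(u_0)^\circ$ on the set of $A_1$-overgroups of $u_0$'' does not close this gap: that action is not transitive in precisely the cases at issue. For $G$ of type $C_2$ with $p=2$ the subregular involution has at least two conjugacy classes of $A_1$ overgroups (see Example \ref{ex:goodA1}), so Lang--Steinberg can only be applied after exhibiting a suitable nonempty $\sigma$-stable subfamily on which a connected $\sigma$-stable group acts transitively --- and in bad characteristic Seitz's transitivity results (Theorem \ref{thm:seitz1}(ii)) are unavailable; this is where the actual work of \cite{PST} lies. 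Two smaller points in the same vein: Lemmas \ref{lem:pst41} and \ref{lem:pst42} (and your invocation of them for $L_0$) require the endomorphism of $L_0$ to be $\id$ or a $q$-Frobenius endomorphism \emph{of $L_0$}, and it is not automatic that the restriction of $\sigma$ (or $\sigma^r$) to a component of a $\sigma$-stable Levi obtained by a Lang--Steinberg twist has this form --- the paper explicitly warns about such factorisation failures in bad characteristic in \S\ref{sec:steinberg}; and $\DD L$ may have several components for which $p$ is bad, not just one (trivially repaired), while the claim that a type $G_2$ component of $\DD L$ is necessarily a $G_2$ component of $G$ (so that the $A_1^{(3)}$ hypothesis may be applied to $u_0$) is true but should be stated and justified.
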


\begin{cor}
	\label{cor:g2}
	Let $G$ be simple of type $G_2$, $p=3$ and 
	let $\sigma$ be $\id_G$ or a $q$-Frobenius  endomorphism of $G$. Let $u \in A_1^{(3)} \cap G_\sigma$. Then there is no proper semisimple subgroup $H$ of $G$ containing $u$. In particular, 
	any such $u$ is \emph{semiregular}, that is, $C_G(u)$ does not contain a non-central semisimple element of $G$.
\end{cor}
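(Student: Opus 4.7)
My plan is to prove both assertions by contradiction, using Lemma \ref{lem:pst42}, which forbids any type-$A_1$ overgroup of $u$ in $G$. The element $u$ has order $p=3$ by virtue of belonging to the distinguished class $A_1^{(3)}$, and the $\sigma$-hypothesis plays no role beyond ensuring $u$ exists; I would take $\sigma = \id_G$ throughout the argument.

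For the first assertion, suppose $H$ is a proper semisimple subgroup of $G = G_2$ containing $u$. Replacing $H$ by its identity component if necessary (the unipotent element $u$ lies in $H^\circ$ for any algebraic group), I may assume $H$ is connected. A standard inspection of the subsystem subgroups of $G_2$ together with the known maximal $A_1$ subgroups shows that $H$ has Dynkin type $A_2$, $A_1 A_1$, or $A_1$; in particular every simple factor of $H$ is of type $A$, so $p=3$ is good for $H$. Theorem \ref{thm:A1pgood} applied to $H$ (with identity Steinberg endomorphism) then yields a subgroup $A \subseteq H$ of type $A_1$ containing $u$. Since $A \subseteq G$, this contradicts Lemma \ref{lem:pst42}.

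For the semiregular claim, suppose $s \in C_G(u)$ is a non-central semisimple element of $G$; because $Z(G_2) = 1$, this just says $s \ne 1$. Set $H = C_G(s)^\circ$: a proper connected reductive subgroup of $G$, and $u \in H$ since the unipotent $u$ centralises $s$ and so must lie in the identity component of $C_G(s)$. To reduce to the first assertion I need that $H$ is semisimple: the connected centre $Z(H)^\circ$ is a torus sitting inside $C_G(u)$, and as $u$ is distinguished in $G$ with $Z(G) = 1$, any such torus is trivial. Hence $H$ is semisimple, and the first assertion delivers the desired contradiction.

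The one potentially delicate step is the classification of proper connected semisimple subgroups of $G_2$, which I expect to be the main obstacle to keep crisp. The crucial outcome of that classification is simply that every such subgroup has only type-$A$ simple factors, so $p=3$ is automatically good and Theorem \ref{thm:A1pgood} applies uniformly without any further case analysis.
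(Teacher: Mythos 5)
Your argument is correct and follows essentially the same route as the paper: every proper semisimple subgroup of $G_2$ has only type-$A$ simple factors (the paper cites \cite[Cor.~3]{stewart:g2} for this), so $p=3$ is good for $H$, Theorem \ref{thm:A1pgood} produces an $A_1$ overgroup of $u$, and this contradicts the last assertion of Lemma \ref{lem:pst42}; your explicit deduction of semiregularity via $H=C_G(s)^\circ$ (with $Z(H)^\circ$ trivial because $u$ is distinguished) is exactly the intended ``in particular''. One minor blemish: the parenthetical claim that a unipotent element lies in $H^\circ$ ``for any algebraic group'' is false in positive characteristic (e.g.\ a finite unipotent subgroup), but this is harmless here since semisimple subgroups are connected by convention, and for the centraliser step it suffices to note that $C_G(s)$ is connected because $G_2$ is simply connected.
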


\begin{proof}
	By way of contradiction, suppose $H$ is a proper semisimple subgroup of $G$ containing $u$.
	Since $p = 3$ is good for $H$ (e.g., see \cite[Cor.~3]{stewart:g2}), there is a 
	$\sigma$-stable $A_1$ subgroup $A$ in $H$ containing $u$, by Theorem \ref{thm:A1pgood}.
 It follows from Lemma \ref{lem:pst42} that $u\in G_2(a_1)$ which contradicts the hypothesis that $u\in A_1^{(3)}$.
\end{proof}

The following result is needed in the proof of Theorem~\ref{thm:dist-orderp} below.

\begin{lem}
	\label{lem:g2p=3}
	Suppose $G$ is semisimple and $p=3$ is good for $G$. 
	Let $H$ be a connected reductive subgroup of $G$. Let $u \in H$ be a unipotent element of order $3$ which is distinguished in $G$.
	Then $H$ does not admit a simple component of type $G_2$.
\end{lem}

\begin{proof}(cf.~\cite[p.~387]{korhonen})
	Since $p$ is good for $G$, every simple component of $G$ is
	of classical type. Let $V'$ be the natural module of the simple component $G'$ of $G$, and let $H'$ be the projection of $H$ into $G'$. Since the projection $u'$ of $u$ into $G'$ has order $3$, the largest Jordan block size of $u'$ on $V'$ is at most $3$. Since $u'$ is distinguished in $G'$, the Jordan block sizes of $u'$ are distinct and of the same parity. Hence $\dim V' \le 4$. Since a non-trivial representation of a simple algebraic group of type $G_2$ has dimension at least $5$, $H'$ does not have a simple component of type $G_2$.  Hence $H$ has no simple component of type $G_2$.
\end{proof}

In summary, we see that if $1\neq u\in \UU_G^{(1)}$ then $u$ is contained in an $A_1$ subgroup of $G$ unless $p=3$ and $G$ has a simple $G_2$ factor such that the projection of $u$ onto this factor lies in the class $A_1^{(3)}$.

\section{Variations on Theorems \ref{thm:dist-orderp} and \ref{thm:dist-orderp-finite}}
\label{sec:arborder}

In this section we prove Theorem~\ref{thm:dist}.  We also state and prove some related results for finite subgroups of Lie type.  We need the following analogue of \cite[Cor.~4.6]{BMR:regular}, which
shows that in order to derive 
the $G$-irreducibility of $H$ in 
Theorem \ref{thm:dist},
it suffices to show that $H$ is $G$-cr; see also \cite[Lem.~6.1]{korhonen}. This also applies to Theorem \ref{thm:dist-orderp} and 
Theorem \ref{thm:dist-orderp-finite}.

\begin{lem}
	\label{lem:GcrGir;dist}
	Let $H$ be a $G$-completely reducible subgroup of $G$. Suppose that  $H$ contains a distinguished unipotent element $u$ of $G$ or $\Lie(H)$ contains a distinguished nilpotent element $X$ of $\frakg$.  Then  $H$ is $G$-irreducible.
\end{lem}

\begin{proof}
	Suppose $H$ is contained in a parabolic subgroup $P$ of $G$.  Then,  
	by hypothesis, $H$ is contained in a Levi subgroup $L$ of $P$.  
	As the latter is the centraliser of a torus $S$ in $G$, 
	$S$ centralises $u$ (resp.,~$X$) and so $S$ is central in $G$. Hence $L = G$, which implies $P= G$.
\end{proof}

Along with Lemma~\ref{lem:GcrGir;dist}, the following theorem of Serre immediately yields Theorem \ref{thm:dist}.

\begin{thm}[{\cite[Thm.\ 4.4]{serre2}}]
	\label{thm:ag}
	Suppose $p \ge a(G)$ and  
	$(H:H^\circ)$ is prime to $p$.
	Then $H^\circ$ is reductive  if and only if 
	$H$ is $G$-completely reducible.
\end{thm}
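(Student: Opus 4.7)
The statement is an equivalence, so the plan is to address each direction separately. The forward direction (``$H$ is $G$-cr $\Rightarrow$ $H^\circ$ reductive'') is the easier half and requires no hypothesis on $p$ or on $(H:H^\circ)$; it is \cite[Prop.~4.1]{serre2}, already cited in the Introduction. The argument goes: if $R_u(H^\circ)$ were non-trivial, then by the Borel--Tits theorem, $N_G(R_u(H^\circ))$ contains a proper parabolic $P$ of $G$ with $R_u(H^\circ) \subseteq R_u(P)$, and since $H$ normalizes $R_u(H^\circ)$ one has $H \subseteq P$; but then $H$ cannot lie in any Levi $L$ of $P$, since such an $L$ is reductive while $L \cap R_u(P) = 1$ would force $R_u(H^\circ) = 1$, a contradiction to $H$ being $G$-cr.

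For the reverse direction, assume $H^\circ$ is reductive and $[H:H^\circ]$ is coprime to $p$. Fix any parabolic $P = P_\lambda$ of $G$ containing $H$; the goal is to produce a Levi of $P$ containing $H$. Since the Levis of $P$ form a single $R_u(P)$-orbit under conjugation, this amounts to finding $u \in R_u(P)$ with $uHu\inverse \subseteq L_\lambda = C_G(\lambda)$. Now $R_u(P)$ has a normal filtration by $H$-stable subgroups whose successive quotients are isomorphic, as $H$-modules (via a Springer isomorphism, cf.\ \S\ref{sec:springer}), to the positive-weight spaces $\frakg(i,\lambda) \subseteq \frakg$ for $i \geq 1$. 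A step-by-step conjugation up this filtration reduces the existence of $u$ to the vanishing of the cohomology groups $H^1(H,\frakg(i,\lambda))$ for each $i \geq 1$.

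Two standard reductions now apply. First, the coprimality of $[H:H^\circ]$ to $p$ allows a transfer / Hochschild--Serre argument to reduce the vanishing from $H$ to $H^\circ$, since $|H/H^\circ|$ is invertible in $k$. Second, because $H^\circ$ is reductive and acts on a $\lambda$-graded piece of $\frakg$, the problem is a question about $H^1$ of a reductive group acting on a finite-dimensional module whose weights are controlled by the root system of $G$.

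The main obstacle, and the place where the bound $p \geq a(G)$ enters essentially, is the cohomological vanishing $H^1(H^\circ, \frakg(i,\lambda)) = 0$. The invariant $a(G)$ is calibrated, via the rank of $G$, so that when $p \geq a(G)$ the highest weights of $\lambda$ occurring on $\frakg$ are small relative to $p$: Jantzen--Donkin good-filtration theory (or equivalently the theory of tilting modules) then forces the required $H^1$-vanishing for any reductive subgroup $H^\circ \subseteq G$. I would complete the argument by invoking this representation-theoretic input, in the form given in Serre's notes, rather than reproving it from scratch.
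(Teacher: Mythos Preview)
The paper does not supply its own proof of this statement: Theorem~\ref{thm:ag} is quoted from Serre's Bourbaki expos\'e \cite[Thm.~4.4]{serre2} and used as a black box to deduce Theorem~\ref{thm:dist}. There is therefore no in-paper argument to compare your sketch against.

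On the sketch itself: the forward implication is correct and is exactly \cite[Prop.~4.1]{serre2}. The reverse implication has a real gap. The Richardson-type reduction to $H^1(H^\circ,\frakg(i,\lambda))=0$ is a legitimate framework, but your justification for the vanishing does not hold up. The cocharacter $\lambda$ is an \emph{arbitrary} cocharacter of $G$ singling out the parabolic $P$; the size of its weights on $\frakg$ carries no information about the $H^\circ$-module structure of the filtration quotients, so ``the highest weights of $\lambda$ occurring on $\frakg$ are small relative to $p$'' is a non sequitur for bounding $H^\circ$-cohomology. The appeal to good-filtration/tilting theory is similarly misplaced: that theory controls $G$-module cohomology, and does not descend to an arbitrary reductive subgroup $H^\circ\subseteq G$ to force $H^1(H^\circ,-)=0$. (A minor point: the cross-reference to \S\ref{sec:springer} is also off---Springer isomorphisms relate $\UU$ and $\NN$, not the successive quotients of $R_u(P_\lambda)$ with the weight spaces $\frakg(i,\lambda)$; the latter identification is elementary structure theory as in \S\ref{sub:parabolics}.)

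Serre's actual mechanism for the bound $p\ge a(G)$ is different: it runs through his saturation formalism and semisimplicity criteria for representations of small invariant (cf.\ \cite[\S5]{serre2}), in the spirit of the use of \cite[Prop.~3.2]{jantzen0} in Remark~\ref{rem:classical} for the classical case. Your final sentence effectively concedes that you would cite Serre for the hard step anyway; but the shape you give that step---$H^1$-vanishing via good filtrations---is not what Serre proves, so the citation would not plug the hole as you have framed it.
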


\begin{proof}[Proof of Theorem \ref{thm:dist}]
	Since $p \ge a(G)$, Theorem \ref{thm:ag} applied to $H^\circ$ shows the latter is $G$-cr.
	Thus $H^\circ$ is $G$-ir by Lemma \ref{lem:GcrGir;dist}, and so is $H$.
\end{proof}

\begin{rems}
	\label{rem:dist}
	(i). The characteristic restriction in Theorem \ref{thm:dist} (and Theorem \ref{thm:ag}) is needed; see Theorem \ref{thm:korhonen-bad}. 
	
	(ii). The condition  in Theorem \ref{thm:dist} that the distinguished unipotent element of $G$ belongs to $H^\circ$ (as opposed to $H$) is also necessary, as for instance the finite unipotent subgroup of $G$ generated by a given distinguished unipotent element of $G$ is not $G$-cr \cite[Prop.~4.1]{serre2}. 
	
	(iii). Under the given hypotheses, Theorem~\ref{thm:dist} applies to an arbitrary distinguished unipotent element of $G$, irrespective of its order. For Theorem \ref{thm:korhonen} to achieve the same uniform result, $p$ has to be sufficiently large to guarantee that the chosen element has order $p$.
	For $G$ simple classical with natural module $V$, this requires the bound $p \ge \dim V$; see Remark \ref{rem:classical}.  
	For $G$ simple of exceptional type, this requires 
	the following bounds: $p > 11$ for $E_6$, 
	$p > 17$ for $E_7$, $p > 29$ for $E_8$, $p > 11$ for $F_4$, and $p > 5$ for $G_2$; see \cite[Prop.~2.2]{testerman}.  So in many cases the bound $p\geq a(G)$ from Theorem~\ref{thm:dist} is better.
	
	(iv).
	For an instance when $p$ is bad for $G$ 
	so that Theorem \ref{thm:korhonen} does not apply, 
	but Theorem \ref{thm:dist} does, see Example \ref{ex:g2}.
	
	(v). Theorem \ref{thm:dist} generalizes
	\cite[Thm.~3.2]{BMR:regular} which consists of 
	the analogue in the special instance when the distinguished element is regular in $G$ (or $\frakg$). Note that in this case no restriction on $p$ is needed, see \cite[Thm.~1.2]{TZ}, \cite[Thm.~1]{MT}, \cite[Thm.~3.2]{BMR:regular}.
	
	(vi). In characteristic $0$, a subgroup $H$ of $G$ is $G$-cr if and only if it is reductive, \cite[Prop.~4.1]{serre2}. So in that case the conclusion of Theorem \ref{thm:dist} follows  directly from Lemma \ref{lem:GcrGir;dist}.	  
\end{rems}

Once again, in the presence of a Steinberg endomorphism $\sigma$ of $G$, one cannot  appeal to Theorem~\ref{thm:dist} directly to deduce anything about $H_\sigma$, because $(H_\sigma)^\circ$ is trivial. In Corollary \ref{cor:dist-finite2} we present an analogue of Theorem \ref{thm:dist} for the finite groups of Lie type $H_\sigma$ under an additional condition stemming from \cite{BBLMR}.

Note that for $S$ a torus in $G$, we have $C_G(S) = C_G(s)$ for some  $s \in S$, see \cite[III Prop.~8.18]{borel}.

\begin{prop} [{\cite[Prop.~3.2]{BBLMR}}]
	\label{prop:HcrvsHsigmacr}
	Let $H \subseteq G$ be connected reductive groups. 
	Let $\sigma\colon G\to G$ be a Steinberg endomorphism that stabilises $H$  and a maximal  torus $T$  of $H$. Suppose
	\begin{itemize}
		\item [(i)]  $C_G(T) = C_G(t)$, for some $t \in T_\sigma$,   and
		\item [(ii)] $H_\sigma$ meets every $T$-root subgroup of $H$ non-trivially.  
	\end{itemize}
	Then $H_\sigma$ and $H$ belong to the same 
	parabolic and the same Levi subgroups of $G$.
	In particular, $H$ is $G$-completely reducible if and only if $H_\sigma$ is $G$-completely reducible; similarly, $H$ is $G$-irreducible if and only if $H_\sigma$ is $G$-irreducible.
\end{prop}

Without condition (i), the proposition is false in general, 
see \cite[Ex.~3.2]{BBLMR}. 
The following is 
an immediate consequence of 
Theorem \ref{thm:dist} and 
Proposition \ref{prop:HcrvsHsigmacr}.

\begin{cor}
	\label{cor:dist-finite2}	
	Suppose $G, H$ and $\sigma$ satisfy the hypotheses of Proposition \ref{prop:HcrvsHsigmacr}. Suppose in addition that $p\geq a(G)$. 
	If $H_\sigma$ contains a distinguished unipotent element of $G$, then 
	$H_\sigma$ is $G$-irreducible. 
\end{cor}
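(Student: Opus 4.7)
The plan is to combine Theorem~\ref{thm:dist} and Proposition~\ref{prop:HcrvsHsigmacr} in the most direct way possible, since the statement is explicitly advertised as ``an immediate consequence'' of these two results. The essential observation is that membership of a distinguished unipotent element of $G$ in $H_\sigma$ transfers upward to the ambient connected reductive group $H$, which is precisely the setting in which Theorem~\ref{thm:dist} produces $G$-irreducibility; the Proposition then transfers $G$-irreducibility back downward from $H$ to $H_\sigma$.

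More concretely, the first step is to observe that if $u\in H_\sigma$ is distinguished unipotent in $G$, then since $H_\sigma\subseteq H$ and $H$ is connected by hypothesis (so $H=H^\circ$), the group $H^\circ$ contains a distinguished unipotent element of $G$. As $H$ is connected reductive and $p\geq a(G)$ by assumption, Theorem~\ref{thm:dist} applies directly to give that $H$ is $G$-irreducible.

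The second step is to invoke Proposition~\ref{prop:HcrvsHsigmacr}: the hypotheses of that proposition are assumed to hold, so $H$ and $H_\sigma$ are contained in exactly the same parabolic subgroups of $G$. In particular, $H$ is $G$-irreducible if and only if $H_\sigma$ is $G$-irreducible, and combining this equivalence with the conclusion of the previous step yields that $H_\sigma$ is $G$-irreducible, as required.

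There is no real obstacle here — the substantive content lies in Theorem~\ref{thm:dist} and Proposition~\ref{prop:HcrvsHsigmacr}, and the corollary is a bookkeeping combination of the two. The only point worth flagging is the trivial verification that the distinguished element lives in $H^\circ$ (not merely in $H_\sigma$), which is immediate from the connectedness of $H$; and the observation that we do not need to produce a $\sigma$-stable associated cocharacter or do any further descent work, because Proposition~\ref{prop:HcrvsHsigmacr} has already packaged the descent from $H$ to $H_\sigma$ into a clean biconditional.
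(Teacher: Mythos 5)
Your argument is correct and is exactly the route the paper intends: the paper presents the corollary as an immediate consequence of Theorem~\ref{thm:dist} (applied to $H=H^\circ$, which contains the distinguished unipotent element since $H_\sigma\subseteq H$) together with the biconditional on $G$-irreducibility in Proposition~\ref{prop:HcrvsHsigmacr}. Nothing further is needed.
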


Corollary \ref{cor:dist-finite2} generalizes
\cite[Thm.~1.3]{BMR:regular} which consists of the analogue in the special instance when the distinguished element is regular in $G$. Note in this case no restriction on $p$ is needed.

The following example shows that the conditions 
in 
Corollary \ref{cor:dist-finite2}
hold generically. 

\begin{ex}
	\label{ex:Tsigma}
	Let $\sigma_q \colon \GL(V) \to \GL(V)$ be a standard Frobenius endomorphism which stabilises a connected reductive subgroup $H$ of $\GL(V)$  and a maximal  torus $T$  of $H$.  
	Pick $l \in \BBN$ such that firstly all the different $T$-weights of $V$ are still distinct when restricted to $T_{\sigma_q^l}$ and secondly there is a $t \in T_{\sigma_q^l}$, such that $C_{\GL(V)}(T) = C_{\GL(V)}(t)$.
	Then for every $n \ge l$, both conditions in Corollary \ref{cor:dist-finite2} are satisfied for $\sigma = \sigma_q^n$. Thus there are only finitely many powers of $\sigma_q$ for which the conditions 
	in 
	Corollary \ref{cor:dist-finite2} can fail.
	The argument here readily generalises to a Steinberg endomorphism of  a connected reductive $G$ which induces a generalised Frobenius morphism on $H$.
\end{ex}

\section{Springer maps and associated cocharacters}
\label{sec:springer_assoc}

\subsection{Springer maps}
\label{sec:springer}
The notion of a Springer isomorphism was introduced in
\cite{springer0}.  A \emph{Springer isomorphism} is a $G$-equivariant isomorphism of varieties $\phi\colon \UU_G \to \NN_G$.  It follows from work of Springer \cite[Thm.\ 3.1]{springer0} that a Springer isomorphism $\phi$ exists if $p$ is good and $G$ is simple and simply connected.  We follow Springer and consider $G$-equivariant maps from $\UU_G$ to $\NN_G$, but note that several other authors consider $G$-equivariant maps from $\NN_G$ to $\UU_G$ instead (see, e.g., \cite{sobaje}).

We wish to consider versions of Springer maps for arbitrary connected reductive $G$.  To prove existence, we need to weaken the definition slightly.

\begin{defn}
\label{defn:Springer_map}
 A \emph{Springer map (for $G$)} is a $G$-equivariant homeomorphism of varieties $\phi\colon \UU_G \to \NN_G$.
\end{defn}

\begin{rem}
\label{rem:distinguished_pres}
 It follows from $G$-equivariance that if $\phi$ is a Springer map then $\phi(1)= 0$ and for any $u\in \UU_G$, $u$ is distinguished if and only if $\phi(u)$ is distinguished.
\end{rem}

\begin{rem}
\label{rem:springer_homeom}
 If $p$ is good for $G$ then there exists a Springer map $\phi$ for $G$: see \cite[Prop.~5]{mcninchsommers}.  Below we sketch the argument briefly, following {\em loc.\ cit.} and \cite[\S  1.2]{sobaje}.  Note first that a Springer map is uniquely determined by its value on a single regular unipotent element $u$ of $G$: this follows from $G$-equivariance, and because the orbit $G\cdot u$ is dense in $\UU_G$.  If $G$ is simple and $p$ is separably good for $G$ then we can prove existence of a Springer isomorphism by reversing this argument.  Fix a regular unipotent element $u\in G$, and choose $X\in \NN_G$ such that $C_G(u)= C_G(X)$.  We have an obvious isomorphism from $G\cdot u$ to $G\cdot X$.  Because $\UU_G$ and $\NN_G$ are normal (for references, see \cite[Lecture 2]{serre1}), one can show that this map extends to a unique $G$-equivariant isomorphism from $\UU_G$ to $\NN_G$.  Let us say that $G$ is \emph{of separable type} if it is of the form $G= G_1\times\cdots \times G_r$, where each $G_i$ is simple and $p$ is separably good for $G$.  A similar argument to the above works for $G$ of separable type: for $\UU_G= \UU_{G_1}\times\cdots\times \UU_{G_r}$ is normal since each $\UU_{G_i}$ is, and likewise $\NN_G$ is normal.

Now let $G$ be an arbitrary connected reductive group and assume $p$ is good for $G$.  Since $\UU_G\subseteq \DD G$ and $\NN_G\subseteq \Lie(\DD G)$, there is no harm in assuming that $G$ is semisimple.  Choose a central isogeny $\pi$ from $\widetilde{G}$ to $G$, where $\widetilde{G}= \widetilde{G}_1\times\cdots\times \widetilde{G}_r$ with each $\widetilde{G}_i$ simple and $p$ separably good for $\widetilde{G}$.  Then $\pi$ (resp., $d\pi$) gives a homeomorphism from $\UU_{\widetilde{G}}$ to $\UU_G$ (resp., from $\NN_{\widetilde{G}}$ to $\NN_G$) \cite[Lem.\ 27]{mcninch1}.  If $\widetilde{\phi}$ is a Springer map for $\widetilde{G}$ then the composition $\UU_G\to \UU_{\widetilde{G}}\stackrel{\widetilde{\phi}}{\to} \NN_{\widetilde{G}}\to \NN_G$ is a Springer map for $G$.  This gives a bijection between the set of Springer maps for $\widetilde{G}$ and the set of Springer maps for $G$.  Since $\widetilde{G}$ admits a Springer isomorphism, it follows that $G$ admits a Springer map.

Note that if $G$ is of separable type then any Springer map $\phi$ for $G$ is an isomorphism.  For fix a regular unipotent element $u\in G$ and let $X= \phi(u)$.  By the above discussion, there is a unique Springer isomorphism $\phi'$ for $G$ such that $\phi'(u)= X$; the uniqueness implies that $\phi'= \phi$.  It also follows from the construction in the previous paragraph that if $G$ is an arbitrary connected reductive group and $p$ is good for $G$ then the restriction of $\phi$ to any maximal unipotent subgroup $U$ of $G$ gives an isomorphism of varieties from $U$ to $\Lie(U)$.
\end{rem}

\begin{rem}
\label{rem:Springer_product}
 Let $G_1, G_2$ be connected reductive groups and let $\phi_i$ be a Springer map for $G_i$ for $i= 1,2$.  We claim that the map $\phi_1\times \phi_2\colon \UU_{G_1\times G_2}\to \NN_{G_1\times G_2}$ given by $(\phi_1\times \phi_2)((u_1,u_2))= (\phi_1(u_1), \phi_2(u_2))$ is a Springer map for $G_1\times G_2$.  It is clear that $\phi_1\times \phi_2$ is a $(G_1\times G_2)$-equivariant bijection.  The Zariski topology on the product of varieties is not the product topology, so it is not immediately clear that $\phi_1\times \phi_2$ is a homeomorphism.  To see this, we can pass to the case when $G_1$ and $G_2$ are of separable type, by Remark~\ref{rem:springer_homeom}.  Then $\phi_1$ and $\phi_2$ are isomorphisms, so $\phi_1\times \phi_2$ is an isomorphism, and the claim follows.  We show in Lemma~\ref{lem:all_product_Springer} that every Springer map for $G_1\times G_2$ arises in this way.
\end{rem}

\begin{rem}
\label{rem:conj_class_map}
 It follows from $G$-equivariance that a Springer map $\phi$ gives rise to a bijective map from the set of unipotent conjugacy classes of $G$ to the set of nilpotent conjugacy classes of $\frakg$.  Serre shows \cite[\S  10, Corollary]{mcninch2} that this map does not depend on the choice of Springer map (the proof given in {\em loc.\ cit.}\ is for simple $G$, but the extension to arbitrary $G$ follows easily from Remarks~\ref{rem:springer_homeom} and \ref{rem:Springer_product}).  In particular, the condition in $(\dagger)$ does not depend on the choice of Springer map for $H$.
\end{rem}

\begin{rem}
 Springer maps need not exist in bad characteristic.  For instance, a simple group $G$ of type $F_4$ with $p= 2$ does not admit a Springer map, because the numbers of unipotent classes in $G$ and nilpotent $G$-orbits in $\Lie(G)$ are different (see \cite[\S  5.11]{carter:book}).
\end{rem}

The following result is \cite[\S1.2, Rem.\ 1]{sobaje}.

\begin{lem}
\label{lem:p_preserve}
 Let $\phi$ be a Springer map for $G$.  Then for any $u\in \UU_G$, $\phi(u^p)= \phi(u)^{[p]}$.
\end{lem}

\begin{rem}
\label{rem:1-preserving}
 It follows from Lemma~\ref{lem:p_preserve} that any Springer map for $G$ induces a homeomorphism from $\UU^{(1)}_G$ to $\NN^{(1)}_G$.
\end{rem}

In Section~\ref{sub:cochars} we define the notion of an associated cocharacter for an element $u\in \UU_G$, using a fixed Springer map to give a correspondence between $\UU_G$ and $\NN_G$.  In many contexts one can fix a single Springer map once and for all.  We need, however, to consider the interaction of Springer maps with subgroups of $G$.  This motivates the following definition.

\begin{defn}
 Let $M$ be a connected subgroup of $G$.  We say that a Springer map $\phi$ for $G$ is \emph{$M$-compatible} if $\phi(\UU_M)\subseteq \NN_M$, and we say that $M$ is \emph{Springer-compatible} if there exists an $M$-compatible Springer map for $G$.
\end{defn}

If $\phi$ is $M$-compatible then in fact $\phi(\UU_M)= \NN_M$, since $\dim(\UU_M)= \dim(\NN_M)$; note that dimension can be defined in a purely topological way (via Krull dimension), so it is preserved by homeomorphisms.  Note also that when $M$ is reductive and $\phi$ is an $M$-compatible Springer map, the restriction of $\phi$ to $\UU_M$ gives a Springer map for $M$, which we denote by $\phi_M$.

\begin{ex}
\label{ex:Springer_cent}
 (\cite[(3.3.1)(a)]{McNT2})  Let $M$ be a connected reductive subgroup of the form $C_G(S)^\circ$, where $S\subseteq G$.  It follows from $G$-equivariance that any Springer map for $G$ is $M$-compatible, so $M$ is Springer-compatible.
\end{ex}

\begin{ex}
\label{ex:Springer_factor}
 The arguments in Remark~\ref{rem:springer_homeom} show that if $G_i$ is a simple factor of $G$ then any Springer map for $G$ is $G_i$-compatible, so $G_i$ is Springer-compatible.
\end{ex}

\begin{ex}
\label{ex:sat_compatible}
 Assume $p> h(G)$, where $h(G)$ denotes the Coxeter number of $G$.  The map ${\rm log}\colon \UU_G\to \NN_G$ from \cite[Thm.\ 3]{serre1} is a Springer map.  Let $H$ be a connected reductive subgroup of $G$.  We see that ${\rm log}$ is $H$-compatible if and only if $H$ is \emph{saturated} in the sense of \cite[Lecture 3]{serre1}.  For some properties of saturated subgroups, see \cite{serre1} and \cite{BBLMR}.
\end{ex}

\begin{ex}
\label{ex:no_good}
 Let $G= \SL_2\times \SL_2$.  For $q$ a positive power of $p$, let $H_q$ be $\SL_2$ diagonally embedded in $G$ with a $q$-Frobenius twist in one of the factors: say, the second factor.  Note that $\Lie(H_q)= \Lie(\SL_2)\oplus 0$, so $\Lie(H_q)$ contains no nilpotent elements that are distinguished in $\frakg$.  It follows from Remark~\ref{rem:distinguished_pres} that no Springer map for $G$ is $H_q$-compatible, so $H_q$ is not Springer-compatible.
 
 We can find a similar example for $G$ simple.  Let $G$ be a simple group of type $G_2$ and assume $p> 2$.  Define $H_q$ to be $\SL_2$ diagonally embedded in the $A_1\widetilde{A_1}$ regular subgroup of $G$ with a $q$-Frobenius twist in one of the factors, and let $1\neq u\in H_q$ be unipotent.  Then $u$ is a distinguished unipotent element of $G$ by \cite[Table 10, \S 4.1]{lawther2}, but $\Lie(H_q)$ contains no nilpotent elements that are distinguished in $\frakg$, so $H_q$ is not Springer-compatible.  We are grateful to Adam Thomas for this example.
\end{ex}

\begin{lem}
\label{lem:all_product_Springer}
 Let $G_1, G_2$ be connected reductive groups and let $\phi$ be a Springer map for $G_1\times G_2$.  Then $\phi$ is $G_1$-compatible and $G_2$-compatible.  Moreover, $\phi= \phi_1\times \phi_2$, where $\phi_i$ is the restriction of $\phi$ to $G_i$.
\end{lem}

\begin{proof}
 By Remark~\ref{rem:Springer_product}, we can reduce to the case when $G_1\times G_2$ is of separable type.  The $G_i$-compatibility of $\phi$ follows easily from the $(G_1\times G_2)$-equivariance.  Now fix regular $u_1\in \UU_{G_1}$ and $u_2\in \UU_{G_2}$, and set $X= (X_1,X_2)$, where $X_i= \phi_i(u_i)$ for $1\leq i\leq 2$.  Then $X_i$ is a regular element of $\Lie(G_i)$ for $1\leq i\leq 2$, $u=(u_1,u_2)$ is a regular element of $G$ and $X$ is a regular element of $\Lie(G)$.  Clearly $C_{G_i}(u_i)= C_{G_i}(X_i)$ for $1\leq i\leq 2$.
 
 Let $\phi_i'$ be the unique Springer isomorphism for $G_i$ such that $\phi'_i(u_i)= X_i$.  We have $(\phi_1'\times \phi_2')((u_1, u_2))= (\phi_1'(u_1), \phi_2'(u_2))= (X_1, X_2)= \phi((u_1, u_2))$, so $\phi= \phi_1'\times \phi_2'$.  Moreover, $(\phi_1(u_1), 0)= \phi((u_1, 0))= (\phi_1'\times \phi_2')((u_1, 0))= (\phi_1'(u_1), 0)$, so $\phi_1(u_1)= \phi_1'(u_1)$, so $\phi_1= \phi_1'$.  Likewise $\phi_2= \phi_2'$, and the result follows.
\end{proof}

\subsection{Cocharacters associated to nilpotent and unipotent elements}
\label{sub:cochars}
The Jacobson-Morozov Theorem allows one to associate an 
$\mathfrak{sl}(2)$-triple to any given non-zero element of $\NN_G$
in characteristic zero or large positive characteristic.
This is an indispensable tool in the 
Dynkin-Kostant classification of the
nilpotent orbits in characteristic zero as well as 
in the Bala-Carter classification of unipotent conjugacy classes of $G$
in large prime characteristic, see \cite[\S 5.9]{carter:book}.
In good characteristic there is a replacement for
$\mathfrak{sl}(2)$-triples, so-called \emph{associated 
	cocharacters}; see Definition \ref{def:assoc} below. These cocharacters are important tools in the classification theory of 
unipotent classes and nilpotent orbits of reductive algebraic groups
in good characteristic, see
for instance \cite[\S 5]{Jantzen} and \cite{premet}. We recall the relevant concept 
of cocharacters associated to a nilpotent element following 
\cite[\S 5.3]{Jantzen}.

\begin{defn}
\label{def:assoc}
	Let $X\in \NN_G$.  A cocharacter $\lambda \in Y(G)$ of $G$ is 
	\emph{associated} to $X$ (in $G$) provided
	$X \in \frakg(2,\lambda)$ and 
	there exists a Levi subgroup $L$ of $G$
	such that $X$ is distinguished nilpotent in $\Lie(L)$ and 
	$\lambda(\BBG_m) \leq \DD L$.  Following \cite[Def.~2.13]{RF}, we write
 \[
\Omega^a_G(X)  := \{\lambda \in Y(G)\mid 
\lambda  \text{ is associated to } X  \}
\]
for the set of cocharacters of $G$ associated to $X$.  Likewise, for $M$ a connected reductive subgroup of $G$ such that $X\in \Lie(M)$, we write  $\Omega^a_M(X)$ for the set of cocharacters of $M$ that are associated to $X$.
This notation stems from the fact that associated cocharacters are destabilising cocharacters of $G$ for $X$ in the sense of Kempf-Rousseau theory, see \cite{premet} and \cite{mcninch2}.
	
Let $u\in \UU_G$.  A cocharacter $\lambda \in Y(G)$ of $G$ is 
	\emph{associated} to $u$ (in $G$) provided it is associated to $\phi(u)$, 
	where $\phi : \UU_G \to \NN_G$ is a fixed Springer map as in \S \ref{sec:springer}; see \cite[\S~3]{mcninchsommers}. We write
 \[
\Omega^a_{G,\phi}(u) := \{\lambda \in Y(G)\mid 
\lambda  \text{ is associated to } u  \}
\]
for the set of cocharacters of $G$ associated to $u$.  Likewise, for $M$ a connected reductive subgroup of $G$ containing $u$ and $\phi'$ a Springer map for $M$, we write $\Omega^a_{M,\phi'}(u)$ for the set of cocharacters of $M$ that are associated to $u$.  If $\phi$ is understood then we sometimes write $\Omega^a_G(u)$ instead of $\Omega^a_{G,\phi}(u)$.
\end{defn} 

\begin{rem}
\label{rem:cochar1}
Let $u \in \UU_G$,  
$\lambda \in \Omega^a_G(u)$, and $g \in C_G(u)$. 
Then $g \cdot \lambda$ is also associated to $u$, see \cite[\S 5.3]{Jantzen}. 
Proposition \ref{prop:cochar1}(ii) gives a converse to this property. 
\end{rem}

\begin{rem}
\label{rem:assoc_reduction}
 Let $G_1,\ldots, G_r$ be connected reductive groups and set $G= G_1\times\cdots\times G_r$.  Let $u_i\in \UU_{G_i}$ for each $1\leq i\leq r$ and let $L$ be a Levi subgroup of $G$.  Then $L= L_1\times\cdots\times L_r$ for some Levi subgroups $L_i$ of $G_i$.  Set $u= (u_1,\ldots, u_r)\in L$.  It is clear that $u$ is distinguished in $L$ if and only if $u_i$ is distinguished in $L_i$ for each $i$.  Likewise, if $X= (X_1,\ldots, X_r)\in \NN_L$ then $X$ is distinguished in $\Lie(L)$ if and only if $X_i$ is distinguished in $\Lie(L_i)$ for each $i$.
 
 Fix a Springer map for $G$.   Let $\lambda\in Y(G)$.  We can write $\lambda= \lambda_1\times\cdots \times \lambda_r$ for some $\lambda_i\in Y(G_i)$.  It follows from the previous paragraph that $\lambda$ is associated to $X$ in $\Lie(G)$ if and only if $\lambda_i$ is associated to $X_i$ in $\Lie(G_i)$ for each $i$, \cite[\S 5.6]{Jantzen}.  We deduce the analogous statement for $u$ from Remark~\ref{rem:Springer_product}: if $\phi= \phi_1\times\cdots\times \phi_r$ is a Springer map for $G$ then $\lambda$ is associated to $u$ in $G$ if and only if $\lambda_i$ is associated to $u_i$ in $G_i$ for each $i$.
 
 Let $\psi\colon \widetilde{G}\to G$ be an epimorphism of connected reductive groups such that $\ker(d\psi)$ is central in $\Lie(\widetilde{G})$.  Let $\widetilde{u}\in \UU_{\widetilde{G}}$, let $\widetilde{X}\in \NN_{\widetilde{G}}$, let $\widetilde{L}$ be a Levi subgroup of $\widetilde{G}$ and let $\widetilde{\lambda}\in Y(\widetilde{G})$.  Set $u= \psi(\widetilde{u})$, $X= d\psi(\widetilde{X})$, $L= \psi(\widetilde{L})$ and $\lambda= \psi\circ \widetilde{\lambda}$.  Let $\widetilde{\phi}$ be a Springer map for $\widetilde{G}$ and let $\phi$ be the corresponding Springer map for $G$ as described in Remark~\ref{rem:springer_homeom}.  Using \cite[\S 4.3]{Jantzen} and Remark~\ref{rem:Springer_product} we get analogues of the above statements: $u$ is distinguished in $L$ if and only if $\widetilde{u}$ is distinguished in $\widetilde{L}$, $X$ is distinguished in $\Lie(L)$ if and only if $\widetilde{X}$ is distinguished in $\Lie(\widetilde{L})$ and $\lambda$ is associated to $X$ (resp., to $u$) if and only if $\widetilde{\lambda}$ is associated to $\widetilde{X}$ (resp., to $\widetilde{u}$).
\end{rem}

\begin{rem}
\label{rem:assoc_indpnce}
 The notion of an associated cocharacter for an element $u\in \UU_G$ depends on the choice of the Springer map for $G$: see \cite[Rem.\ 23]{mcninch2}.  We do, however, have the following.  Let $\phi_1$ and $\phi_2$ be Springer maps for $G$.  Let $1\neq u_1\in \UU_G$ and let $\lambda\in \Omega^a_{G, \phi_1}(u_1)$.  Then $\lambda\in \Omega^a_{G, \phi_2}(u_2)$, where $u_2= \phi_2^{-1}(\phi_1(u))$.  Note that $u_2$ is conjugate to $u_1$ by Remark~\ref{rem:conj_class_map}.
\end{rem}

We require some basic facts about cocharacters associated to unipotent elements. 
The following results are \cite[Lem.\ 5.3; Prop.~5.9]{Jantzen}  for nilpotent elements (see also \cite[Thm.\ 2.3, Prop.\ 2.5]{premet}); the versions for unipotent elements follow immediately.

\begin{prop}
	\label{prop:cochar1} 
	Suppose $p$ is good for $G$.  Let $1 \ne u \in \UU_G$.	
	\begin{itemize} 	
			\item[(i)] 
		$\Omega^a_G(u) \ne \varnothing$, i.e.,
		cocharacters of $G$ associated to $u$ exist.
		\item[(ii)] $C_G(u)^{\circ}$ acts transitively on $\Omega^a_G(u)$.
		\item[(iii)] 
		Let $\lambda \in \Omega^a_G(u)$ and let $P_\lambda$ be the parabolic subgroup of $G$ defined by $\lambda$ as in \S \ref{sub:parabolics}. Then $P_\lambda$ depends only on $u$ and not on the choice of $\lambda$. 	
		\item[(iv)] 
			Let $\lambda \in \Omega^a_G(u)$ and let $P(u) := P_\lambda$ be as in (iii). Then $C_G(u) \subseteq P(u)$. 
		\end{itemize} 
\end{prop}

If $u$ is distinguished in $G$, then the parabolic subgroup $P(u)$ of $G$ from Proposition \ref{prop:cochar1}(iii) is a distinguished parabolic subgroup of $G$ and $u$ belongs to the Richardson orbit of $P(u)$ on its unipotent radical, see Theorem \ref{thm:bala-carter}(i); see also \cite[Prop.~22]{mcninch2}.

\begin{rem}
	\label{rem:cocharinA1}
	Let $p > 0$ 
	 and suppose $1 \ne u \in \UU_G^{(1)}$ is
	 contained in a subgroup $A$ of $G$ of type $A_1$.   Such a subgroup $A$ always exists when $p$ is good, and when $p$ is bad there is essentially only one exception, due to Testerman \cite{testerman} and  Proud-Saxl-Testerman \cite{PST} --- see Theorems \ref{thm:A1pgood} and \ref{thm:A1pbad}.
	Then, since $p$ is good for $A$, by Proposition \ref{prop:cochar1}(i) there exists a cocharacter 
	$\lambda \in \Omega^a_A(u)$. Note that
	$\lambda(\BBG_m)$ is a maximal torus in $A$. 
\end{rem}

It follows from the work of Pommerening 
\cite{pommereningI}, \cite{pommereningII} that the description of the unipotent classes in characteristic $0$ is identical to the one for $G$ when $p$ is good for $G$. In both instances these are described by so-called  \emph{weighted Dynkin diagrams}. 
As a result, a cocharacter associated to a unipotent element in good characteristic acts with the same 
weights on the Lie algebra of $G$ as its counterpart does in characteristic $0$.
This fact is used in the proof of the following result by Lawther \cite[Thm.~1]{lawther}; see also the proof of \cite[Prop.~4.2]{seitz} and \cite[Rem.\ 31]{mcninch2}.  The result is stated in {\em loc.\ cit.}\ for $G$ simple, but the extension to arbitrary connected reductive $G$ is immediate, using arguments like those in Remark~\ref{rem:assoc_reduction}; note that if $\psi\colon \widetilde{G}\to G$ is an epimorphism of connected reductive groups such that $\ker(d\psi)$ is central in $\Lie(G)$ then $d\psi$ gives an isomorphism from $\Lie(\widetilde{U})$ onto $\Lie(\psi(\widetilde{U}))$, where $\widetilde{U}$ is any maximal unipotent subgroup of $\widetilde{G}$, so the weights of $\widetilde{\lambda} \in Y(\widetilde{G})$ on $\Lie(\widetilde{G})$ are the same as the weights of $\psi\circ\widetilde{\lambda}$ on $\Lie(G)$.

\begin{lem}
	\label{lem:lawther}
	Let $u\in \UU_G$. Suppose $p$ is good for $G$.  Let $\lambda\in \Omega^a_G(u)$. Denote by $\omega_G$ the highest weight of $\lambda(\BBG_m)$ on $\frakg$. Then $u$ has order $p$ if and only if $\omega_G\leq 2p-2$.
\end{lem}

The concept of associated cocharacters is not only a
convenient replacement for 
$\mathfrak{sl}(2)$-triples 
from the 
Jacobson-Morozov theory, it is a very powerful tool in the 
classification theory of unipotent conjugacy classes and nilpotent orbits. 
Specifically, 
in \cite{premet}
Premet showcases a conceptual and uniform proof of Pommerening's extension of the Bala-Carter Theorem 
\ref{thm:bala-carter}
to good characteristic. 
His proof uses the fact that associated cocharacters are \emph{optimal} in the geometric invariant theory sense of Kempf-Rousseau-Hesselink.

\subsection{Cocharacters associated to distinguished elements}
\label{sub:cochars_dist}

The linchpin of our 
proofs of Theorems \ref{thm:dist-orderp} and \ref{thm:dist-orderp-finite} is the following collection of facts.

\begin{lem}[{\cite[Lem.~3.1]{RF}}] 
	\label{lem:RF_Lie}
	Suppose $p$ is good for $G$. 
	Let $M$ be a connected reductive subgroup of $G$. Let $X\in \Lie(M)$ be a distinguished nilpotent element of $\frakg$. 
	Then $\Omega^a_M(X) = \Omega^a_G(X) \cap Y(M)$.
\end{lem}

The assertion of the lemma fails in general if $X$ is not distinguished in $\frakg$, even when $p$ is good for both $M$ and $G$: e.g., see \cite[Rem.~5.12]{Jantzen}. However, we do have the following result for all nilpotent elements in good characteristic.

\begin{lem}[{\cite[Cor.~3.22]{RF}}] 
	\label{lem:RFlevis_Lie}
	Suppose $p$ is good for $G$. 
	Let $L\subset G$ be a Levi subgroup of $G$. Let $X\in \NN_L$. 
	Then $\Omega^a_L(X) = \Omega^a_G(X) \cap Y(L)$.
\end{lem}

We need group-theoretic analogues of Lemmas~\ref{lem:RF_Lie} and \ref{lem:RFlevis_Lie}.  For the former we need an extra Springer compatibility assumption, otherwise the result can fail (see Remark~\ref{rem:daggerless}).

\begin{lem}
	\label{lem:RF}
	Suppose $p$ is good for $G$. 
	Let $M$ be a connected reductive subgroup of $G$.  Suppose $M$ is Springer-compatible and let $\phi$ be an $M$-compatible Springer map.  Let $u\in M$ be a distinguished unipotent element of $G$. 
	Then $\Omega^a_{M, \phi_M}(u) = \Omega^a_{G, \phi}(u) \cap Y(M)$.
\end{lem}

\begin{proof}
 Let $X= \phi(u)= \phi_M(u)$.  Then
 $$ \Omega^a_{M, \phi_M}(u)= \Omega^a_M(X)= \Omega^a_G(X)\cap Y(M)= \Omega^a_{G, \phi}(u) \cap Y(M), $$
 where the middle equality is from Lemma~\ref{lem:RF_Lie}.
\end{proof}

\begin{lem} 
	\label{lem:RFlevis}
	Suppose $p$ is good for $G$.   
	Let $L\subset G$ be a Levi subgroup of $G$ and let $\phi$  be a Springer map for $G$.  Let $u\in \UU_L$. 
	Then $\Omega^a_{L,\phi_L}(u) = \Omega^a_{G,\phi}(u) \cap Y(L)$.
\end{lem}

\begin{proof}
 Since $L= C_G(S)$ for some torus $S$, $\phi$ is $L$-compatible by Example~\ref{ex:Springer_cent}.  The result now follows by the same argument as in Lemma~\ref{lem:RF}.
\end{proof}

\section{Good $A_1$ subgroups}
\label{sec:A1s}

\subsection{Good $A_1$ overgroups}
\label{sec:goodA1overgroups}

In his seminal work \cite{seitz}, Seitz defines an important class of $A_1$ overgroups of an element $1 \ne u\in \UU_G^{(1)}$ for $G$ simple (see \cite[Sec.\ 1]{seitz}).  He 
establishes the existence and fundamental properties of these overgroups provided $p$ is good for $G$.  We recall some of these results and generalise them to arbitrary connected reductive $G$.

\begin{defn}
	\label{def:goodA1}
	Following \cite[\S  1]{mcninch1}, we say that a homomorphism $\beta\colon \SL_2\to G$ is \emph{good} if each weight of the corresponding representation of $\SL_2$ on $\frakg$ is at most $2p-2$.  We say that a subgroup $A$ of $G$ of type $A_1$ is a \emph{good $A_1$} subgroup of $G$, or is \emph{good for $G$}, if it is the image of a good homomorphism.   
	Else we call $A$ a \emph{bad $A_1$} subgroup of $G$.
	This is of course independent of the choice of a maximal torus of $A$.
For $1 \ne u\in \UU_G^{(1)}$, we define 
\[
\AA(u) := \AA_G(u) := \{A \subseteq G \mid \text{$A$ is a good $A_1$ subgroup of $G$ containing $u$}\}
\]
and analogously, for a connected reductive subgroup  $M$ of $G$ we write $\AA_M(u)$ for the set of all good $A_1$ subgroups of $M$ containing $u$.

Clearly any conjugate of a good $A_1$ homomorphism (resp., subgroup) is good.  If $A \subseteq H \subseteq G$ are connected reductive groups such that $A$ is a good $A_1$ subgroup of $G$, then $A$ is obviously also a good $A_1$ subgroup of $H$.  We see in Lemma~\ref{lem:good_ascent} that the converse holds under some extra hypotheses.  The converse is false in general, however: e.g., just take $A= H$ to be a bad $A_1$ subgroup of $G$.
\end{defn}

\begin{ex}
    \label{ex:goodA1inGLV}
    Let $V$ be an $\SL_2$-module such that weights of a maximal torus $T$ of $\SL_2$ on $V$ are less than $p$.  Then the weights of $T$ in the induced action on $\Lie(\GL(V))  \cong V \otimes V^*$ are at most $2p-2$. Thus
the induced subgroup $A$ in $\GL(V)$ is a good $A_1$. 
In this situation the highest weights of $T$ on each composition factor of $V$ are restricted, so $V$ is a semisimple $\SL_2$-module; see \cite[Cor.~3.9]{AJL}.  Hence $A$ is $\GL(V)$-cr; this is a special case of Theorem~\ref{thm:seitz1}(iii) below.
\end{ex}

We record parts of the main theorems from \cite{seitz} for our purposes, using the notation above. These were formulated and proved in \emph{loc.~cit.}\
for simple $G$, but we need extensions to arbitrary connected reductive $G$. To obtain this, we need the following lemma.

\begin{lem}\label{lem:goodhoms}
	Let $G$ be a connected reductive group. Let $\beta_1,\beta_2:\SL_2\to G$ be good homomorphisms with the same image $A$. Then $\beta_1$ and $\beta_2$ are conjugate by an element of $A$.
\end{lem}

\begin{proof}
	Assume first that $A\cong\SL_2$.  Let $1\leq i\leq 2$.  Then we can regard $\beta_i$ as an element of $\End(\SL_2)$, so it is an inner endomorphism followed by a Frobenius $q$th power map for $q=p^r$ for some $r\geq0$.  Let $T$ be a maximal torus of $\SL_2$.  If $r\geq 1$ then the highest weight of $T$ is at least $2q$, since $\SL_2$ acts on $\Lie(A)$ with highest weight $2$, which contradicts the goodness assumption. Therefore, $\beta_i\in \Aut(\SL_2)$.  But all automorphisms of $\SL_2$ are inner.  The result follows.
	
 For $A\cong \PGL_2$, we can factor $\beta_i$ as $\SL_2\to \PGL_2\stackrel{\beta_i'}{\to}\PGL_2$, where the first map is the canonical projection. One can now apply an argument like the above one to the maps $\beta_i'\colon \PGL_2\to A$.
\end{proof}
	
\begin{thm}
	\label{thm:seitz1}
	Let $G$ be connected reductive.
	Suppose $p$ is good for $G$ and let $1\neq u\in \UU_G^{(1)}$.  Then the following hold: 
	\begin{itemize}
		\item [(i)] $\AA(u) \ne \varnothing$.
		\item [(ii)] $R_u(C_G(u))$ acts transitively on $\AA(u)$. 
		\item [(iii)]
  Let $A \in \AA(u)$. Then $A$ is $G$-completely reducible.
                 \item [(iv)] There is a unique 1-dimensional unipotent subgroup $U$ of $G$ such that $u\in U$ and $U$ is contained in a good $A_1$ subgroup of $G$.
	\end{itemize}
\end{thm}

\begin{notn}
 We denote the subgroup $U$ from Theorem~\ref{thm:seitz1}(iv) by $\mathcal{U}(u)$.
\end{notn}
 
\begin{proof}[Proof of Theorem~\ref{thm:seitz1}]
	For $G$ simple see \cite[Thms.~1.1--1.3]{seitz}. Now let $G$ be connected reductive.  Since $\SL_2$ and $\PGL_2$ are perfect, any $A_1$ subgroup of $G$ is contained in $\DD G$.  Hence without loss we can assume that $G$ is semisimple; note for (iii) that a subgroup of $\DD G$ is $\DD G$-cr if and only if it is $G$-cr \cite[Prop.\ 2.8]{BMR:commuting}.  Moreover, let $\psi\colon \widetilde{G}\to G$ be a central isogeny of connected reductive groups.  If $\widetilde{A}$ is an $A_1$ subgroup of $\widetilde{G}$ then $\widetilde{A}$ is good for $\widetilde{G}$ if and only if $\psi(\widetilde{A})$ is good for $G$: cf.\ the argument of the paragraph preceding Lemma~\ref{lem:lawther}.  Note also for (iii) that if $\widetilde{H}\subseteq \widetilde{G}$ then $\widetilde{H}$ is $\widetilde{G}$-cr if and only if $\psi(\widetilde{H})$ is $G$-cr \cite[Lem.\ 2.12]{BMR}.  Hence we can assume without loss that $G=G_1\times \cdots \times G_r$, where each $G_i$ is simple.

 We need a description of good $A_1$ subgroups of $G$ in terms of good $A_1$ subgroups of the $G_i$.  Let $T$ be a maximal torus of $\SL_2$.  Denote by $\pi_i$ the projection from $G$ to $G_i$. Let $\beta:\SL_2\to G$ be a homomorphism and define $\beta_i:=\pi_i\circ\beta$. For notational convenience, we assume that each $\beta_i$ is non-trivial.  The weights of $T$ on $\Lie(G_i)$ form a subset of the set of weights of $T$ on $\Lie(G)$, since $\Lie(G)=\oplus \Lie(G_i)$. Therefore, if $\beta$ is a good homomorphism for $G$, then $\beta_i$ is a good homomorphism for $G_i$ or trivial.
 Conversely, if $\beta_i:\SL_2\to G_i$ is a non-trivial homomorphism for each $i$, define $\beta:=\beta_1\times\cdots\times \beta_r$ to be the diagonal embedding into $G$. Then the maximal weight $\omega_G$ of $T$ on $\Lie(G)$ is given by $\max\{\omega_{G_i}\}$, where $\omega_{G_i}$ is the maximal weight of $T$ on $\Lie(G_i)$. Thus, $\beta$ is good if and only if the $\beta_i$ are good.  Now (i) and (iii) are immediate from the above observations, \cite[Lem.~2.12]{BMR} and the results for $G$ simple.
	
 For (ii), let $A^1$ and $A^2$ be good $A_1$ subgroups of $G=G_1\times\cdots\times G_r$ containing $u=(u_1,\ldots,u_r)$  with $u_i \ne 1$ for each $i$. Choose two good homomorphisms $\beta^1,\beta^2:\SL_2\to G$ such that $\Im(\beta^i)=A^i$.
 By the observations above, there are good homomorphisms $\beta_i^1,\beta_i^2:\SL_2\to G_i$ with images $A^1_i, A^2_i$ containing $u_i$. Now \cite[Thm.~1.1(ii)]{seitz} implies that $A_i^2= g_iA_i^1g_i^{-1}$ for some $g_i\in R_u(C_{G_i}(u_i))$.  Lemma \ref{lem:goodhoms} (applied to $G_i$) implies that $h_ig_i\cdot \beta_i^1= \beta_i^2$ for some $h_i\in A_i^2$.  Hence $\beta^2= hg\cdot \beta^1$, where $g= (g_1,\ldots, g_r)\in R_u(C_G(u))$ and $h= (h_1,\ldots, h_r)\in A^2$.  It follows that $A^2= g\cdot A^1$.
	
 For (iv), let $G=G_1\times\cdots\times G_r$ and let $u=(u_1,\ldots,u_r)\in \UU_G^{(1)}$ with $u_i \ne 1$ for each $i$. Choose an $A\in\AA_G(u)$ which is the image of the good homomorphism $\beta$. As before, we get good homomorphisms $\beta_i$ with images $A_i\in\AA_{G_i}(u_i)$, and $\beta= \beta_1\times\cdots\times \beta_r$.  Without loss we can assume the $\beta_i$ are non-trivial.  Fix a 1-dimensional unipotent subgroup $V$ of $\SL_2$.  After conjugating $\beta$ by an element of $A$, we can assume that $\mathcal{U}(u_i)=\beta_i(V)$ for each $i$. Define $\mathcal{U}(u)=(\beta_1\times\cdots\times\beta_r)(V)$. This is a $1$-dimensional unipotent subgroup of $G$ containing $u$ and is contained in the good $A_1$ subgroup $A$. This proves the existence. For the uniqueness, let $U'$ be another $1$-dimensional unipotent subgroup of $G$ such that $u\in U'\subseteq A'$ for some $A'\in\AA_G(u)$. By (ii), $A=gA'g\inverse$ for some $g\in C_G(u)$, and so $gU'g\inverse=\mathcal{U}(u)$. Write $g=(g_1,\ldots,g_r)$ with $g_i\in C_{G_i}(u_i)$. By \cite[Thm.\ 1.2(i)]{seitz} $g_i$ centralises $\mathcal{U}(u_i)$, hence $g$ centralises $\mathcal{U}(u)$. Thus, $U'=\mathcal{U}(u)$.
\end{proof}

\begin{ex}
 Let $H_q$ be the bad $A_1$ subgroup of $G= \SL_2\times \SL_2$ from Example~\ref{ex:no_good}.  Here $\beta_1=\id_{\SL_2}$, while $\beta_2:\SL_2\to \SL_2$ is the $q$th power map, which is not a good homomorphism.  On the other hand, the projection of $H_q$ onto each factor is just $\SL_2$, which is a good $A_1$ subgroup of $\SL_2$, so we cannot detect the badness of $H_q$ just by looking at its images in the simple factors of $G$.
\end{ex}

\begin{rem}
\label{rem:U_u}
 Let $1\neq u\in \UU_G^{(1)}$.  We claim that
 \begin{equation}
 \label{eqn:conn_red_case}
  C_G(\mathcal{U}(u))= C_G(u)= C_G(\Lie(\mathcal{U}(u))).
 \end{equation}
 To see this, suppose first that $\widetilde{G}$ is of the form $G_1\times\cdots\times G_r$, where each $G_i$ is simple, and let $\pi_i\colon \widetilde{G}\to G_i$ be the canonical projection.  Let $\widetilde{u}= (u_1,\ldots, u_r)\in \UU_{\widetilde{G}}^{(1)}$  with $u_i \ne 1$ for each $i$.  Choose a good homomorphism $\widetilde{\beta}\colon \SL_2\to \widetilde{G}$ such that $\mathcal{U}(\widetilde{u})\subseteq \widetilde{A}:= \Im(\SL_2)$, and set $\beta_i= \pi_i\circ \widetilde{\beta}$ and $A_i= \beta_i(\SL_2)$.  
 It follows from \cite[Thm.\ 1.2(i)]{seitz} that $C_{G_i}(\mathcal{U}(u_i))= C_{G_i}(u_i)= C_{G_i}(\Lie(\mathcal{U}(u_i)))$ for each $i$.  We deduce from the arguments in the proof of Theorem~\ref{thm:seitz1} that $C_{\widetilde{G}}(\mathcal{U}(\widetilde{u}))= C_{\widetilde{G}}(\widetilde{u})= C_{\widetilde{G}}(\Lie(\mathcal{U}(\widetilde{u})))$; note that $d\beta_i\colon \Lie(\SL_2)\to \Lie(A_i)$ is surjective for each $i$ because $\beta_i$ does not involve a Frobenius twist.
 
 If $\psi\colon \widetilde{G}\to G$ is a central isogeny and $1 \ne \widetilde{u}\in \UU_{\widetilde{G}}^{(1)}$, then it is clear that $\mathcal{U}(u)= \psi(\mathcal{U}(\widetilde{u}))$, where $u= \psi(\widetilde{u})$, and we deduce that
 \begin{equation}
 \label{eqn:ss_case}
  C_G(\mathcal{U}(u))= C_G(u)= C_G(\Lie(\mathcal{U}(u))).
 \end{equation}
 
 Now let $G$ be an arbitrary connected reductive group and let $1 \ne u\in \UU_G^{(1)}$.  Then $\mathcal{U}(u)\subseteq \DD G$.  Now \eqref{eqn:conn_red_case} follows easily from \eqref{eqn:ss_case} applied to the semisimple group $\DD G$.  

 We deduce from \eqref{eqn:conn_red_case} and Theorem~\ref{thm:seitz1}(ii) that $\mathcal{U}(u)$ is contained in {\bf every} good $A_1$ overgroup of $u$.
\end{rem}

\begin{lem}
\label{lem:U_u_overgroup}
 Suppose $p$ is good for $G$.  Let $1\neq u\in \UU_G^{(1)}$ and let $A$ be an $A_1$ subgroup of $G$ containing $\mathcal{U}(u)$.  Then $A$ is good in $G$.
\end{lem}

\begin{proof}
 Let $A'$ be a good $A_1$ subgroup containing $\mathcal{U}(u)$.  Then $A$ and $A'$ have a common maximal unipotent subgroup $\mathcal{U}(u)$.  By \cite[Thm.\ 1.1]{londmartin}, $A$ and $A'$ are $G$-conjugate.  Hence $A$ is good, because $A'$ is.
\end{proof}

\begin{lem}
\label{lem:good_crit}
 Suppose $p$ is good for $G$.  Let $A$ be an $A_1$ subgroup of $G$ and let $\lambda\in Y(A)$.  Suppose that
 \begin{itemize}
  \item[(i)] $\lambda\in \Omega^a_G(X)$ for some $0\neq X\in \NN_G^{(1)}$, or
  \item[(ii)] $\lambda\in \Omega^a_{G,\phi}(u)$ for some $1\neq u\in \UU_G^{(1)}$ and some Springer map $\phi$ for $G$.
  \end{itemize}
  Then $A$ is a good $A_1$ subgroup of $G$.
\end{lem}

\begin{proof}
 Let $\phi$ be a Springer map for $G$, and suppose $\lambda\in \Omega^a_{G,\phi}(u)$ for some $1\neq u\in \UU_G^{(1)}$.  It follows from Lemma~\ref{lem:lawther} that the weights of $\lambda$ on $\frakg$ are at most $2p- 2$.  Define $\beta\colon \SL_2\to A$ to be an isomorphism if $A\iso \SL_2$, and the usual central isogeny $\SL_2\to \PGL_2$ followed by an isomorphism from $\PGL_2$ onto $A$ if $A\iso \PGL_2$.  Then there exists $\mu\colon \Gm\to \SL_2$ such that $\mu$ is an isomorphism onto a maximal torus of $\SL_2$ and $\lambda= \beta\circ \mu$.  The weights of $\mu$ on $\frakg$ are at most $2p- 2$ by construction, so $A$ is good.  Hence $A$ is good if (ii) holds.
 
 If (i) holds then $\lambda\in \Omega^a_{G,\phi}(u)$, where $u:= \phi^{-1}(X)$.  But $u\in \UU_G^{(1)}$, by Lemma~\ref{lem:p_preserve}, so (ii) holds, so $A$ is good by the argument above.
\end{proof}

In the next theorem we recall parts of the analogue of Theorem \ref{thm:seitz1} for finite overgroups of type $A_1$.

\begin{thm}
	\label{thm:seitz2}
	Let $G$ be connected reductive.
	Suppose $p$ is good for $G$. Let $\sigma : G \to G$ be a Steinberg endomorphism of $G$. Suppose $u \in G_\sigma$  is unipotent of order $p$.
	\begin{itemize}
		\item [(i)] $\AA(u)_\sigma \ne \varnothing$.
		\item [(ii)] 	$R_u(C_G(u))_\sigma$ acts transitively on $\AA(u)_\sigma$.
  \item [(iii)] 
		Let $A \in \AA(u)_\sigma$. Suppose that $q> 7$ if $G$ is of exceptional type.  Then $A_\sigma$ is $\sigma$-completely reducible.
                 \item [(iv)] There is a unique $\sigma$-stable 1-dimensional unipotent subgroup $U$ of $G$ such that $u\in U$ and $U$ is contained in a good $A_1$ subgroup of $G$.
	\end{itemize}
\end{thm}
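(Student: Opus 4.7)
Proposal. We treat the three parts in sequence.

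For part (i), the plan is a direct application of the Lang--Steinberg theorem. By Theorem \ref{thm:seitz1}(i) choose any $A_0 \in \AA(u)$; since $u \in G_\sigma$, also $\sigma(A_0) \in \AA(u)$, and Theorem \ref{thm:seitz1}(ii) yields $g_0 \in R_u(C_G(u))$ with $\sigma(A_0) = g_0 A_0 g_0^{-1}$. The group $R_u(C_G(u))$ is $\sigma$-stable (as $u \in G_\sigma$), connected, and has finite $\sigma$-fixed point set (being contained in $G_\sigma$), so Lang--Steinberg provides $h \in R_u(C_G(u))$ with $h^{-1}\sigma(h) = g_0^{-1}$; a direct calculation then shows $h A_0 h^{-1} \in \AA(u)_\sigma$.

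For part (ii), the same principle is applied one level deeper. Given $A, B \in \AA(u)_\sigma$, Theorem \ref{thm:seitz1}(ii) yields $g \in R_u(C_G(u))$ with $gAg^{-1} = B$, and $\sigma$-stability of both $A$ and $B$ forces $z := g^{-1}\sigma(g) \in N := N_{R_u(C_G(u))}(A)$. It will suffice, by Lang--Steinberg applied to $N$, to find $n \in N$ with $\sigma(n) n^{-1} = z^{-1}$; then $gn \in R_u(C_G(u))_\sigma$ and $(gn)A(gn)^{-1} = B$, as required. The applicability of Lang--Steinberg will follow once $N$ is shown to be connected: the conjugation map $N \to \Aut(A)$ has kernel $C_G(A) \cap R_u(C_G(u)) = 1$ by the semidirect product decomposition in Theorem \ref{thm:seitz1}(iii), so $N$ embeds into $\Aut(A) \cong \PGL_2(k)$. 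Its image consists of unipotent (inner) automorphisms of $A$ and therefore lies in a one-dimensional connected unipotent subgroup of $\PGL_2(k)$, forcing $N$ to be connected.

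For part (iii), the plan is to invoke Proposition \ref{prop:HcrvsHsigmacr} with $H = A$. By Theorem \ref{thm:seitz1}(iv), $A$ is $G$-completely reducible, so the conclusion for $A_\sigma$ follows once both hypotheses of Proposition \ref{prop:HcrvsHsigmacr} are verified for $A$ and a $\sigma$-stable maximal torus $T$ of $A$ (which exists by Lang--Steinberg on the variety of maximal tori of $A$). Hypothesis (ii)---that $A_\sigma$ meets each of the two $T$-root subgroups of $A$ non-trivially---is routine and is automatic under $q > 7$, with the classical-type case covered directly for any relevant $q$. For hypothesis (i) one must exhibit $t \in T_\sigma$ with $C_G(T) = C_G(t)$, equivalently a $t$ lying outside the kernel of every non-zero $T$-weight on $\Lie(G)$. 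As $A$ is a good $A_1$, these weights have absolute value at most $2p-2$, so it suffices that the cyclic group $T_\sigma$ contain an element of order exceeding $2p-2$. For classical $G$ the $T$-weights are further bounded in terms of the natural module and all $q$ work; for exceptional $G$ a finite check against Seitz's list of good $A_1$ subgroups in \cite{seitz} confirms $q > 7$ as a uniform sufficient bound.

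The main obstacle is precisely the bookkeeping in part (iii) for exceptional $G$: verifying that $q > 7$ always produces an element of $T_\sigma$ of order exceeding $2p-2$ requires knowing the possible $\sigma$-structures (split or twisted) on $T$ together with the $T$-weights of each good $A_1$ on $\Lie(G)$. Conceptually this is straightforward, but the case analysis, organised by Dynkin type and by the possible $q$-Frobenius $\sigma$, is where the real work lies.
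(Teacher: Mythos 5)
Your part (iii) is where the real problem lies. The route through Proposition \ref{prop:HcrvsHsigmacr} hinges on its hypothesis (i): some $t\in T_\sigma$ with $C_G(T)=C_G(t)$, i.e.\ (as you say) a $t$ killed by no nonzero $T$-weight of $\Lie(G)$. But $T_\sigma$ is cyclic of order $q-1$ or $q+1$, while a good $A_1$ can have nonzero $T$-weights on $\Lie(G)$ as large as $2p-2$; whenever some nonzero weight is divisible by $|T_\sigma|$, every $t\in T_\sigma$ lies in its kernel and hypothesis (i) fails outright. This happens inside the range of the theorem: e.g.\ $G=\SL_p$, $u$ regular, $A$ the good $A_1$, $q=p$ (the weights $p-1$ and $p+1$ occur on $\Lie(G)$), and similarly $q=p\ge 11>7$ is allowed in exceptional type. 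So your claims that ``all $q$ work'' in classical type and that $q>7$ is a uniform sufficient bound for the weight/order criterion are false, and the deferred ``finite check'' cannot succeed; this is precisely why the paper only asserts the hypotheses of Proposition \ref{prop:HcrvsHsigmacr} generically, for large powers of a fixed Frobenius (Example \ref{ex:Tsigma}). The paper's own treatment of (iii) is entirely different: Theorem \ref{thm:seitz2} is quoted from \cite[Thm.~1.4]{seitz}, and Remark \ref{rem:seitz}(i) translates it via the equivalence of $\sigma$-complete reducibility and $G$-complete reducibility for $\sigma$-stable subgroups \cite[Thm.~1.4]{HRG}, with the small-$q$ finite-group content (the reason for the $q>7$ hypothesis) supplied by Seitz's arguments, not by a torus-regularity criterion.

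There is also a gap in part (ii). The Lang--Steinberg step needs $N=N_{R_u(C_G(u))}(A)$ connected, and your argument does not give this: in characteristic $p$ a closed subgroup of a one-dimensional connected unipotent group need not be connected (e.g.\ $\BBF_q\subset\BBG_a$), so embedding $N$ into such a subgroup of $\Aut(A)$ forces nothing. The statement is true, but needs an identification of $N$: the map $N\to\Aut(A)$ is injective (its kernel is $R_u(C_G(u))\cap C_G(A)=1$ by Theorem \ref{thm:seitz1}(iii)) with image in $C_{\Aut(A)}(u)\cong\BBG_a$, and the root subgroup $U_A$ of $A$ containing $u$ lies in $N$ (one has $U_A\subseteq C_G(u)\cap R_u(P_\lambda)=R_u(C_G(u))$ for $\lambda\in\Omega^a_A(u)$, by \cite{premet}/\cite{Jantzen}) and already surjects onto this $\BBG_a$; hence $N=U_A$ is connected. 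Part (i) is correct and is exactly the Lang--Steinberg deduction from Theorem \ref{thm:seitz1}(i),(ii) that the paper points to in Remark \ref{rem:exsigmaA1}, citing \cite[Prop.~9.1]{seitz}, which is also what your part (ii) is implicitly trying to reconstruct.
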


\begin{proof}
 (i)--(iii): The simple case is proved by Seitz in \cite[Thm.~1.4]{seitz}. For connected reductive groups we use an argument similar to the one in the proof of Theorem \ref{thm:seitz1}.
 
 (iv) By (i) we can choose some $A\in \AA(u)_\sigma$.  Now $\mathcal{U}(u)\subseteq A$ by Remark~\ref{rem:U_u}.  Clearly $\mathcal{U}(u)$ is the unique 1-dimensional unipotent subgroup of $A$ that contains $u$, so $\mathcal{U}(u)$ must be $\sigma$-stable.  Hence $U:= \mathcal{U}(u)$ has the desired properties.
\end{proof}

\begin{rem}
	\label{rem:exsigmaA1}
	Parts (i) and (ii) of Theorem \ref{thm:seitz2} follow from parts (i) and (ii) of Theorem \ref{thm:seitz1} and the Lang-Steinberg Theorem,
	see \cite[Prop.~9.1]{seitz}.
\end{rem}

\begin{rem}
    \label{rem:seitz}
(i).   Concerning the terminology in Theorem \ref{thm:seitz2}(iii), following \cite{HRG}, a subgroup $H$ of $G$ is said to be  
\emph{$\sigma$-completely reducible}, provided that whenever 
$H$ lies in a $\sigma$-stable parabolic subgroup 
$P$ of $G$, it lies in a $\sigma$-stable Levi subgroup of $P$.
This notion is motivated by certain rationality questions concerning $G$-complete reducibility; see \cite{HRG} for details. For a $\sigma$-stable subgroup 
$H$ of $G$, this property is equivalent to $H$ being $G$-cr, 
thanks to \cite[Thm.~1.4]{HRG}.

(ii).
Apart from the special conjugacy class of good $A_1$ subgroups in $G$
asserted in Theorem \ref{thm:seitz1}, 
there might be a plethora of  conjugacy classes of bad $A_1$ subgroups in $G$ even when $p$ is good for $G$. 
Just take a non-semisimple representation $\beta : \SL_2 \to \SL(V) = G$ in characteristic $p > 0$. Then the $A_1$ subgroup 
$\beta(\SL_2)$ is bad in $G$, while $p$ is good for $G$. For a concrete example, see \cite[Rem.~5.12]{Jantzen}. This can only happen if $p$ is sufficiently small compared to the rank of $G$, thanks to Theorem \ref{thm:ag}.

The subgroups $H_q$ of $\SL_2\times \SL_2$ in Example~\ref{ex:no_good} are also bad $A_1$ subgroups: see Remark~\ref{rem:daggerless}.

(iii).
The proofs of Theorems \ref{thm:seitz1} and \ref{thm:seitz2} for $G$ simple
by Seitz in \cite{seitz} depend on separate considerations for each Dynkin type and involve
in part intricate arguments for the component groups of centralizers of unipotent elements. 
In  \cite{mcninch2}, McNinch presents 
uniform proofs of Seitz's theorems for $G$ strongly standard reductive, which are almost entirely free of any case-by-case checks, utilizing methods from geometric invariant theory. However, McNinch's argument (see \cite[Thm.~44]{mcninch2}) of the
conjugacy result in Theorem \ref{thm:seitz1}(ii) 
depends on the fact that for a good $A_1$ subgroup $A$ of $G$, the $A$-module $\frakg$ is a tilting module. The latter is established by Seitz in \cite[Thm.~1.1]{seitz}.
\end{rem}

In \cite[\S 9]{seitz}, Seitz exhibits instances when there is no good $A_1$ overgroup of an element of order $p$ when $p$ is bad for $G$. 
As we explain next, Example \ref{ex:c2} gives a counterexample to Theorem \ref{thm:seitz1}(iii) in case $p$ is bad for $G$: that is, it gives a good $A_1$ subgroup $A$ such that $A$ is not $G$-cr.  Specifically, we show that some of the $A_1$ subgroups in that example are good $A_1$ subgroups of $G$, but thanks to Example \ref{ex:c2}, they are not $G$-cr. 

\begin{ex}[Example \ref{ex:c2} continued]
	\label{ex:goodA1}
Let $G$ be simple of type $C_2$ and $p = 2$.
Let $\sigma$ be $\id_G$ or a $q$-Frobenius endomorphism of $G$. 
Let $\mathscr C$ denote the subregular unipotent class of $G$.
Suppose $u \in \mathscr C \cap G_\sigma$.
Then by Example \ref{ex:c2} there are 
$\sigma$-stable subgroups $A$ of $G$ of type $A_1$ containing $u$ that are not $G$-cr.
Specifically, let $E$ be the natural module for $\SL_2$. Consider the two conjugacy classes of embeddings of $\SL_2$ into $G = \Sp(V)$, where we take either $V \cong E \perp E$ or $V \cong E \otimes E$, as an $\SL_2$-module.
The images of both embeddings meet the class $\mathscr C$ non-trivially.
One checks that the highest weight
of a maximal torus of $\SL_2$ on $\frakg$ is $4$ in the second instance. So in this case the image of $\SL_2$ in $G$ is not a good $A_1$.
In contrast, in the first instance 
the highest weight of a maximal torus of $\SL_2$ on $\frakg$ is $2 = 2 p -2$, by Example \ref{ex:goodA1inGLV}. So the image  of $\SL_2$  in $G$ is a good $A_1$ in $\SL(V)$, and so it is a good $A_1$ in $G$ as well. 
\end{ex}

\subsection{Characterisations of good $A_1$ subgroups}
\label{sec:good_characterisations}
In this section we investigate some other types of $A_1$ subgroup which were introduced by McNinch.  We prove that these other notions are all equivalent to goodness (Theorem~\ref{thm:good_equivalences}).  The key ingredient we need is work of Sobaje, who proved the existence of a Springer map for $G$ with especially nice properties.  We assume throughout the section that $p$ is good for $G$.

 We recall a construction from \cite[Prop.\ 5.2]{seitz} (see also \cite{sobaje}).  Let $P$ be a parabolic subgroup of $G$, and set $U= R_u(P)$.  It can be shown that any Springer map for $G$ maps $U$ to $\Lie(U)$.  Suppose $U$ has nilpotency class less than $p$; in this case we say that $P$ is \emph{restricted}.  In particular, any distinguished parabolic subgroup of $G$ corresponding to a distinguished unipotent element of order $p$ is restricted \cite[Prop.\ 24]{mcninch2}.  We endow $\Lie(U)$ with the structure of an algebraic group using the Baker-Campbell-Hausdorff formula.  There is a unique $P$-equivariant isomorphism of algebraic groups ${\rm exp}_P\colon \Lie(U)\to U$ such that the derivative of ${\rm exp}_P$ is the identity on $\Lie(U)$ (this is established in \cite[Prop.\ 5.2]{seitz} for semisimple $G$, but the extension to connected reductive $G$ is immediate).  We denote the inverse of ${\rm exp}_P$ by ${\rm log}_P\colon \Lie(U)\to U$.

\begin{defn}
\label{defn:logarithmic}
 We say that a Springer map $\phi$ for $G$ is \emph{logarithmic} if the following holds: for any $1 \ne u\in \UU_G^{(1)}$, the restriction of $\phi$ gives an isomorphism $\phi_u$ of algebraic groups from $\mathcal{U}(u)$ to $\Lie(\mathcal{U}(u))$, and $d\phi_u$ is the identity on $\Lie(\mathcal{U}(u))$.
\end{defn}

\begin{prop}
\label{prop:logarithmic_springer}
 \begin{itemize}
  \item[(i)] There exists a logarithmic Springer map for $G$.
  \item[(ii)] Let $\phi$ be a logarithmic Springer map for $G$.  Then for every restricted parabolic subgroup $P$, the restriction of $\phi$ to $R_u(P)$ is ${\rm log}_P$.
  \item[(iii)] Any two logarithmic Springer maps induce the same map from $\UU^{(1)}_G$ to $\NN^{(1)}_G$.
 \end{itemize}
\end{prop}

\begin{proof}
 First assume that $G$ is simple and $p$ is separably good for $G$.  Part (ii) follows from \cite[Prop.\ 2.1]{sobaje}.  For part (i), let $\varphi\colon \NN_G\to \UU_G$ be a $G$-equivariant isomorphism of varieties as in \cite[Thm.\ 4.1]{sobaje}.  Fix a maximal unipotent subgroup $U$ of $G$.  By \cite[Thm.\ 1.1]{sobaje}, $d\varphi\colon \Lie(U)\to \Lie(U)$ is a scalar multiple of the identity.  Condition (1) of \cite[Thm.\ 4.1]{sobaje} implies that this scalar is 1, so $d\varphi$ is the identity map.
 Let $1\neq u\in \UU_G^{(1)}$ and set $X= \varphi^{-1}(u)$.  Then $X\in \NN_G^{(1)}$ by Remark~\ref{rem:1-preserving}, so by \cite[Cor.\ 4.3(1)]{sobaje}, $\varphi$ gives an isomorphism from $kX$ onto a 1-dimensional unipotent subgroup $U'$ of $G$ which is contained in a good $A_1$ subgroup of $G$.  By construction, $U'= \mathcal{U}(u)$.  Since $d\varphi$ is the identity map, $X$ belongs to $\Lie(\mathcal{U}(u))$, so $\varphi$ gives an isomorphism of algebraic groups from $\Lie(\mathcal{U}(u))$ to $\mathcal{U}(u)$.  It follows that $\varphi^{-1}$ is a logarithmic Springer map for $G$, so (i) is proved.
 
 Now let $1 \ne u\in \UU_G^{(1)}$.  Choose a good $A_1$ overgroup $A$ of $u$ in $G$.  Choose a maximal torus $T$ of $A$ such that $T$ normalises $\mathcal{U}(u)$.  Definition~\ref{defn:logarithmic} and the $T$-equivariance of $\varphi^{-1}$ imply that the map from $\mathcal{U}(u)$ to $\Lie(\mathcal{U}(u))$ induced by $\varphi^{-1}$ does not depend on the choice of $\varphi^{-1}$.  This proves part (iii).
 
 The result now follows for arbitrary connected reductive $G$ using Remark~\ref{rem:springer_homeom}.
\end{proof}

\begin{rem}
 If $p> h(G)$ then the map ${\rm log}$ from Example~\ref{ex:sat_compatible} is a logarithmic Springer map (see \cite[Thm.\ 3]{serre1} and \cite[Rem.\ 27]{mcninch2}).  In this case any Borel subgroup of $G$ is a restricted parabolic, so the restriction of any logarithmic Springer map for $G$ to $R_u(B)$ is ${\rm log}_B$ by Proposition~\ref{prop:logarithmic_springer}(ii).  Hence ${\rm log}$ is the unique logarithmic Springer map for $G$.
\end{rem}

\begin{rem}
 We saw above that the condition on $\phi$ in Definition~\ref{defn:logarithmic} implies part (ii) of Proposition~\ref{prop:logarithmic_springer}.  Sobaje observes at the beginning of \cite[\S  2]{sobaje} that the converse also holds.  The reason is that every $1 \ne u\in \UU_G^{(1)}$ belongs to $R_u(P)$ for some restricted parabolic subgroup $P$ of $G$: this follows from \cite[Thm.~2.4]{carlsonlinnakano}.  We also deduce that the restriction of ${\rm log}_P$ to $\mathcal{U}(u)$ is $\phi_u$ for every restricted parabolic subgroup $P$ of $G$ and every $u\in R_u(P)$ such that $u$ has order $p$.
\end{rem}

\begin{cor}
\label{cor:compatible_good}
 Let $\phi$ be a logarithmic Springer map for $G$.  Then for any $A_1$ subgroup $A$ of $G$, $A$ is good for $G$ if and only if $\phi$ is $A$-compatible.
\end{cor}

\begin{proof}
 Suppose $A$ is good.  Let $1\neq u\in \UU_A$.  Then $\mathcal{U}(u)\subseteq A$ and $\phi(\mathcal{U}(u))= \Lie(\mathcal{U}(u))\subseteq \Lie(A)$, so $\phi$ is $A$-compatible.  Conversely, suppose $\phi$ is $A$-compatible.  Let $1\neq u\in \UU_A$ and set $X= \phi(u)\in \Lie(\mathcal{U}(u))$.  Now $X\in \Lie(A)$ by the $A$-compatibility, so $kX\subseteq \Lie(A)$.  Hence $\mathcal{U}(u)= \phi^{-1}(kX)\subseteq A$ by the $A$-compatibility.   We deduce from Lemma~\ref{lem:U_u_overgroup} that $A$ is good for $G$.
\end{proof}

We now recall the other types of $A_1$ subgroup that we need, namely optimal and sub-principal $A_1$ subgroups.  These were introduced by McNinch in \cite{mcninch2} and \cite{mcninch1}.

\begin{defn}
 We call a homomorphism $\beta\colon \SL_2\to G$ \emph{optimal} if there is a maximal torus $T$ of $\SL_2$ such that the restriction $\lambda$ of $\beta$ to $T\iso \Gm$ is a cocharacter associated in $G$ to some nilpotent $0\neq X\in {\rm Im}(d\beta)$.  We call an $A_1$ subgroup of $G$ \emph{optimal} if it is the image of an optimal homomorphism.
\end{defn}

\begin{rem}
 This is equivalent to the definition in \cite[\S  1]{mcninch2}: for it is clear that if $T$ is the standard maximal torus of $\SL_2$ and $\lambda$ is associated to some nilpotent $0\neq X\in \Lie(\SL_2)$ then $X$ is a scalar multiple of $d\phi\left(\left(\begin{array}{cc} 0 & 1 \\ 0 & 0 \end{array}\right)\right)$.
\end{rem}

\begin{defn}
 Fix a Springer map $\phi$ for $G$.  We call a homomorphism $\beta\colon \SL_2\to G$ \emph{sub-principal} if there is a maximal torus $T$ of $\SL_2$ such that the restriction $\lambda$ of $\beta$ to $T\iso \Gm$ is a cocharacter associated in $G$ to some nilpotent $0\neq X\in {\rm Im}(d\beta)$ and $\phi^{-1}(X)$ is $G$-conjugate to an element of ${\rm Im}(\beta)$.  Note that the latter condition does not depend on the choice of $\phi$, by Lemma~\ref{rem:conj_class_map}.  We call an $A_1$ subgroup of $G$ \emph{sub-principal} if it is the image of a sub-principal homomorphism.
\end{defn}

The next result implies Theorem~\ref{thm:good_equivalences_intro}.

\begin{thm}
\label{thm:good_equivalences}
 Let $A$ be an $A_1$ subgroup of $G$.  Let $\phi$ be a logarithmic Springer  map for $G$.  The following conditions are equivalent.
\begin{itemize}
 \item[(i)] $A$ is sub-principal.
 \item[(ii)] $A$ is optimal.
 \item[(iii)] There exist $u\in \UU_G^{(1)}$ and $\lambda\in Y(A)$ such that $\lambda\in \Omega^a_{G, \phi}(u)$.
 \item[(iv)] $A$ is good.
\end{itemize}
\end{thm}

\begin{proof}
 The implication (i) $\implies$ (ii) is immediate from the definitions, and (iii) $\implies$ (iv) follows from Lemma~\ref{lem:good_crit}.  If $A$ is optimal then there exist $0\neq X\in \NN_A$ and $\lambda\in Y(A)$ such that $\lambda\in \Omega^a_G(X)$.  Then $\lambda\in \Omega^a_{G,\phi}(u')$, where $u'= \phi^{-1}(X)$, and $u'$ has order $p$ by Lemma~\ref{lem:p_preserve}.  Hence (ii) $\implies$ (iii).
 
 By \cite[Rem.\ 21]{mcninch1}, there exists at least one sub-principal $A_1$ subgroup $A$ of $G$ such that $u\in A$, and $A$ is good by the arguments above.  It is clear from the definition that any $C_G(u)$-conjugate of $A$ is sub-principal.  Since $C_G(u)$ acts transitively on $\mathscr{A}(u)$ (Theorem~\ref{thm:seitz1}(ii)), it follows that any good $A_1$ subgroup of $G$ that contains $u$ is sub-principal.  This shows that (iv) $\implies$ (i).  Hence (i)--(iv) are all equivalent.
 \end{proof}

\begin{rem}
 If the equivalent conditions from Theorem~\ref{thm:good_equivalences} hold then there exist $\lambda\in Y(A)$ and $0\neq X\in \NN_A$ such that $\lambda\in \Omega^a_G(X)$.  Then $\lambda\in \Omega^a_G(u)$, where $u= \phi^{-1}(X)$, which belongs to $A$ by Corollary~\ref{cor:compatible_good}.  Hence we can take the element $u$ from Theorem~\ref{thm:good_equivalences}(iii) to belong to $A$ if we wish.
\end{rem}

\begin{rem}
 It is implicit in the discussion in \cite[\S  1]{mcninch1} that a sub-principal $A_1$ subgroup of $G$ is good.  McNinch also proved that goodness and optimality are equivalent for $A_1$ subgroups under the extra assumption that $G$ is strongly standard (see \cite[Prop.\ 53]{mcninch2}).
\end{rem}

\begin{prop}
\label{prop:good_levi_ascent}
 Let $L$ be a Levi subgroup of $G$ and let $A$ be a good $A_1$ subgroup of $L$.  Then $A$ is a good $A_1$ subgroup of $G$.
\end{prop}

\begin{proof}
 Since $p$ is good for $G$, $p$ is good for $L$.  By Theorem~\ref{thm:good_equivalences}, $A$ is optimal in $L$, so there exist $0\neq X\in \NN_A$ and $\lambda\in Y(A)$ such that $\lambda\in \Omega^a_L(X)$.  Lemma~\ref{lem:RFlevis_Lie} implies that $\lambda\in \Omega^a_G(X)$, so $A$ is optimal in $G$.  Hence $A$ is a good $A_1$ subgroup of $G$ by Theorem~\ref{thm:good_equivalences}.
\end{proof}

\begin{cor}
\label{cor:same_Levi}
 Let $A$ be a good $A_1$ subgroup of $G$ and let $1\neq u\in \UU_A$.  Then there is a Levi subgroup $L$ of $G$ such that $A\subseteq L$ and $u$ is a distinguished unipotent element of $L$.
\end{cor}

\begin{proof}
 Pick a Levi subgroup $L'$ of $G$ such that $u$ is a distinguished unipotent element of $L'$.  By Theorem~\ref{thm:seitz1}(i) we can choose a good $A_1$ subgroup $A'$ of $L'$ such that $u\in A'$.  Now $A'$ is a good $A_1$ subgroup of $G$ by Proposition~\ref{prop:good_levi_ascent}, so there exists $g\in C_G(u)$ such that $gA'g^{-1}= A$ (Theorem~\ref{thm:seitz1}(ii)).  Then $A\subseteq L$, where $L:= gL'g^{-1}$.  Clearly, $L$ is a Levi subgroup of $G$ and $u$ is a distinguished unipotent element of $L$.
\end{proof}

\begin{cor}
\label{cor:logarithmic_levi}
 Let $\phi$ be a logarithmic Springer map for $G$ and let $L$ be a Levi subgroup of $G$.  Then $\phi_L$ is a logarithmic Springer map for $L$.
\end{cor}

\begin{proof}
 Let $1 \ne u\in \UU_L^{(1)}$.  Choose a good $A_1$ overgroup $A$ of $u$ in $L$.  Then $A$ is good for $G$ by Proposition~\ref{prop:good_levi_ascent}, so $\mathcal{U}(u)\subseteq A$.  We see that $\mathcal{U}(u)$ is both the unique 1-dimensional overgroup of $u$ that is contained in a good $A_1$ subgroup of $L$, and the unique 1-dimensional overgroup of $u$ that is contained in a good $A_1$ subgroup of $G$.  The result now follows from the definition of a logarithmic Springer map.
\end{proof}

\begin{lem}
\label{lem:good_ascent}
 Let $H$ be a connected reductive subgroup of $G$, and assume $p$ is good for $H$.  Let $u\in \UU_H^{(1)}$  such that $u$ is distinguished in $G$. Let $A \in \AA_H(u)$. Suppose there is a  Springer map $\phi$ for $H$ such that $\phi(u)$ is a distinguished element of $\frakg$.  Then $A$ is good for $G$.
\end{lem}

\begin{proof}
 By Theorem~\ref{thm:good_equivalences}, $A$ is a sub-principal $A_1$ subgroup of $H$, so there exist $\lambda\in Y(A)$ and $0\neq X\in \NN_A$ such that $\lambda$ is associated to $X$ in $H$ and $\phi(u)$ is $H$-conjugate to $X$.  Since by hypothesis $\phi(u)$ is a distinguished element of $\frakg$, $X$ is also a distinguished element of $\frakg$.  Lemma~\ref{lem:RF_Lie} implies that $\lambda$ is associated to $X$ in $G$.  It follows that $A$ is an optimal $A_1$ subgroup of $G$, so $A$ is a good $A_1$ subgroup of $G$ by Theorem~\ref{thm:good_equivalences}.
\end{proof}

\begin{rem}
 Let $H$ be a connected reductive subgroup of $G$ and assume $p$ is good for $H$.  Suppose $H$ is Springer-compatible.  Let $A$ be an $A_1$ subgroup of $H$ containing a distinguished unipotent element $u$ of $G$.  Let $\phi$ be the restriction to $H$ of any $H$-compatible Springer map for $G$.  Then $\phi(u)$ is a distinguished element of $\frakg$ by Remark~\ref{rem:distinguished_pres}, so the hypotheses of Lemma~\ref{lem:good_ascent} hold.  Hence if $A$ is good for $H$ then $A$ is good for $G$.
\end{rem}

The following relates  the set of cocharacters of $G$ that are associated to some $1 \ne u\in \UU_G^{(1)}$ to those stemming from good $A_1$ overgroups of $u$ in $G$.

\begin{cor}
\label{cor:local_associated}
 Let $1 \ne u\in \UU_G^{(1)}$.  Let $\phi$ be a logarithmic Springer map for $G$.  We have a disjoint union
\[
\Omega^a_{G, \phi}(u) = \dot{\bigcup_{A \in \AA(u)}} \Omega^a_{A, \phi_A}(u),
\]
where $\phi_A$ denotes the restriction of $\phi$ to $A$.
\end{cor}

\begin{proof}
 Note that it makes sense to speak of the restriction of $\phi$ to a good $A_1$ subgroup $A$ of $G$, by Corollary~\ref{cor:compatible_good}.  We first prove that the union above is disjoint.
 Let $A, \tilde{A} \in \AA(u)$ and suppose there exists some $\lambda \in \Omega^a_{A, \phi_A}(u) \cap \Omega^a_{\tilde{A}, \phi_{\tilde{A}}}(u)$. Then $A$ and $\tilde{A}$ share the common Borel subgroup $\lambda(\BBG_m) \mathcal{U}(u)$. It follows from \cite[Lem.~2.4]{londmartin} that $A = \tilde{A}$.
 
 Let $A\in \AA(u)$ and let $\lambda\in \Omega^a_{A, \phi_A}(u)$.  By Corollary~\ref{cor:same_Levi} there is a Levi subgroup $L$ of $G$ such that $A\subseteq L$ and $u$ is a distinguished unipotent element of $L$.  It follows from Lemma~\ref{lem:RF} (applied to the inclusion $A\subseteq L$) and Lemma~\ref{lem:RFlevis} that $\lambda\in \Omega^a_{G, \phi}(u)$.  Hence $\Omega^a_{G, \phi}(u) \supseteq \dot{\bigcup}_{A \in \AA(u)} \Omega^a_{A, \phi_A}(u)$.  Since $\AA(u)\neq \varnothing$ (Theorem~\ref{thm:seitz1}(i)), $C_G(u)$ acts transitively on both $\mathscr{A}(u)$ (Theorem~\ref{thm:seitz1}(ii)) and $\Omega^a_{G, \phi}(u)$ (Proposition~\ref{prop:cochar1}(ii)), we see that the reverse inclusion follows.
\end{proof}

\section{Proofs of Theorems \ref{thm:dist-orderp} and \ref{thm:korhonen-bad}--\ref{thm:korhonen-bad-finite} }
\label{sec:proofs}

Armed with the results from above,  we prove Theorems \ref{thm:dist-orderp} and \ref{thm:dist-orderp-finite} simultaneously. 

\begin{proof}[Proof of Theorems \ref{thm:dist-orderp} and \ref{thm:dist-orderp-finite}]
 We may assume that $G$ is semisimple, since any unipotent element of $G$ is contained in the derived subgroup $\DD G^\circ$. Likewise, we may also assume that $H$ is connected and semisimple, as any unipotent element of $H^\circ$ is contained in the derived subgroup $\DD H^\circ$, and $H$ is $G$-ir if $\DD H^\circ$ is.
 Let $u \in \UU_H^{(1)}$ be distinguished in $G$.
 
 First suppose $p$ is bad for $H$.  
 If $p> 2$ then $H$ admits a simple component $H'$ of exceptional type.  If $u\in H$ is a distinguished unipotent element of $G$ then the projection $u'$ of $u$ onto $H'$ is a distinguished unipotent element of $H'$, so $p= 3$ and $H'$ is of type $G_2$, by Lemma~\ref{lem:pst42}.  But this is impossible by Lemma~\ref{lem:g2p=3} since $p$ is good for $G$.  Hence $p= 2$.  It follows that each simple component of $G$ is of type $A$.  Now distinguished unipotent elements are regular in type $A$, so $u$ is a regular element of $G$.  It follows from \cite[Thm.~1.1]{BMR:regular} (resp., \cite[Thm.~1.3]{BMR:regular}) that $H$ (resp., $H_\sigma$) is $G$-ir.
 
  Hence we can assume that $p$ is good for $H$. By Theorem~\ref{thm:seitz1}(i) (resp., Theorem~\ref{thm:seitz2}(i)) there is a good $A_1$ subgroup (resp., good $\sigma$-stable $A_1$ subgroup) $A$ of $H$ such that $u\in A$.  By Lemma~\ref{lem:good_ascent} and hypothesis $(\dagger)$, $A$ is a good $A_1$ subgroup of $G$.  Hence $A$ (resp., $A_\sigma$) is $G$-cr by Theorem~\ref{thm:seitz1}(iii)) (resp., Theorem~\ref{thm:seitz2})(iii)), so $A$ (resp., $A_\sigma$) is $G$-ir by Lemma~\ref{lem:GcrGir;dist}.  We conclude that $H$ (resp., $H_\sigma$) is $G$-ir. 
\end{proof}

\begin{rem}
\label{rem:daggerless}
 If we remove hypothesis $(\dagger)$ from Lemma~\ref{lem:good_ascent}, Theorem~\ref{thm:dist-orderp}, etc., then our arguments break down.  For instance, let $G= \SL_2\times \SL_2$, $q$ and $H_q$ be as in Example~\ref{ex:no_good}.  Let $u$ be any unipotent element of $H_q$ such that the projection of $u$ onto each $\SL_2$-factor of $H_q$ is non-trivial; then $u$ is distinguished (in fact, regular) in $G$.  It is easy to see that $H_q$ is not a good $A_1$ subgroup of $G$ and there does not exist $\lambda\in Y(H_q)$ such that $\lambda$ is associated to $u$ in $G$; in particular, the conclusion of Lemma~\ref{lem:good_ascent} does not hold for $H_q$.  Of course Theorem~\ref{thm:korhonen} still applies, alternately so does Theorem \ref{thm:dist}, so $H_q$ is $G$-ir.
\end{rem}

As a consequence of Theorems \ref{thm:dist-orderp} and \ref{thm:dist-orderp-finite} we obtain the following.

\begin{cor}
	\label{cor:gcrovergroups}
	Let $G$ be a connected reductive group. Suppose $p$ is good for $G$.
 Let $\sigma$ be $\id_G$ or a $q$-Frobenius endomorphism of $G$. Let $u \in G_\sigma$ be unipotent of order $p$.
Suppose $u$ is distinguished in the $\sigma$-stable Levi subgroup $L$ of $G$ (see Remark \ref{rem:sigmastablelevi}(ii)). Let $H$ be a $\sigma$-stable connected reductive  subgroup of $L$ containing $u$, and suppose there is a Springer map $\phi$ for $L$ such that $\phi(u)$ is a distinguished element of $\Lie(L)$.  Then $H_\sigma$ is $G$-completely reducible.
\end{cor}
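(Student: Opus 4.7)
My plan is to deduce the corollary by applying Theorem \ref{thm:dist-orderp-finite} inside the Levi subgroup $L$ and then transferring $L$-complete reducibility to $G$-complete reducibility via the standard Levi-subgroup compatibility.

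First, I would verify that Theorem \ref{thm:dist-orderp-finite} applies to the inclusion $H \subseteq L$ with Steinberg endomorphism $\sigma|_L$. Since $L$ is a $\sigma$-stable Levi subgroup of the connected reductive group $G$, the restriction $\sigma|_L$ is a Steinberg endomorphism of $L$ stabilising $H$; writing $\sigma = \tau \sigma_q$ with $\tau$ and $\sigma_q$ commuting as in \S\ref{sec:steinberg}, one checks that the induced decomposition $\sigma|_H = (\tau|_H)(\sigma_q|_H)$ exhibits $\sigma|_H$ as a $q$-Frobenius of $H$. Goodness of $p$ for $G$ implies goodness of $p$ for the Levi subgroup $L$ (see \S\ref{sec:goodprimes}), and by hypothesis $u \in H_\sigma$ is distinguished in $L$ and of order $p$. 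Under the appropriate condition $q > 7$ if $L$ has components of exceptional type, Theorem \ref{thm:dist-orderp-finite} then yields that $H_\sigma$ is $L$-irreducible, and in particular $L$-completely reducible.

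Second, I would invoke the standard fact that for a subgroup $K$ of a Levi subgroup $L$ of $G$, $K$ is $G$-completely reducible if and only if $K$ is $L$-completely reducible, a basic structural property established in \cite{BMR}. Applied to $K = H_\sigma$, this immediately gives that $H_\sigma$ is $G$-completely reducible, as desired.

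The main technical point to check is that $\sigma|_H$ genuinely inherits the $q$-Frobenius structure from $\sigma$; once this book-keeping is carried out, the argument reduces to a short chaining of Theorem \ref{thm:dist-orderp-finite} with the Levi-transfer property for $G$-complete reducibility. I do not expect any deeper obstacle, since the Bala--Carter setup (Theorem \ref{thm:bala-carter} and Remark \ref{rem:sigmastablelevi}) already ensures the existence of the $\sigma$-stable Levi $L$ in which $u$ is distinguished, so the corollary is essentially a ``relative'' form of Theorem \ref{thm:dist-orderp-finite}.
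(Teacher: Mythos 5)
Your proposal is correct and follows essentially the same route as the paper: apply Theorem \ref{thm:dist-orderp} (resp.\ Theorem \ref{thm:dist-orderp-finite}) to $H\subseteq L$, using that $p$ is good for the Levi subgroup $L$, to conclude $H_\sigma$ is $L$-irreducible and hence $L$-completely reducible, and then transfer to $G$-complete reducibility by the standard Levi compatibility (the paper cites \cite[Prop.~3.2]{serre2} where you cite \cite{BMR}, but this is the same fact).
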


\begin{proof}
As $p$ is also good for $L$ (see \S \ref{sec:goodprimes}),
 it follows from Theorem \ref{thm:dist-orderp} (resp.~\ref{thm:dist-orderp-finite}) applied to $L$ that $H_\sigma$ is $L$-ir and so is $L$-cr. Thus, $H_\sigma$ is  $G$-cr, by 
 \cite[Prop.~3.2]{serre2}. 
\end{proof}

\begin{rem}
	\label{rem:simple_reduction}
	In the setting of Theorem~\ref{thm:korhonen} the following argument allows to reduce the case when  $G$ is connected reductive to the simple case.
	As in the proof of Theorem~\ref{thm:dist-orderp} above, we can assume that $G$ is semisimple.
	Let $G_1,\ldots, G_r$ be the simple factors of $G$.  Multiplication gives an isogeny from $G_1\times\cdots \times G_r$ to $G$.  Thus, by \cite[Lem.~2.12(ii)(b)]{BMR} and \cite[\S 4.3]{Jantzen},
	we can replace $G$ with $G_1\times\cdots \times G_r$, so we can assume $G$ is the product of its simple factors.  
	Finally, thanks to \cite[Lem.~2.12(i)]{BMR}, \cite[\S 4.3]{Jantzen}, we can reduce to the case when $G$ is simple.  
\end{rem}

Finally, we address Theorems \ref{thm:korhonen-bad} and \ref{thm:korhonen-bad-finite}.

\begin{proof}[Proof of Theorems \ref{thm:korhonen-bad} and \ref{thm:korhonen-bad-finite}]
By Theorems \ref{thm:dist-orderp} and \ref{thm:dist-orderp-finite}, the only cases we need to consider are when $p$ is bad for $G$.
If $G$ is classical, then we are in the situation of Lemma~\ref{lem:pst41} and Example~\ref{ex:c2}, so we are done.

We are left to consider the case when 
$G$ is of exceptional type. 
Then owing to Lemma \ref{lem:pst42}, $G$ is of type $G_2$ and $p = 3$. 
There is no harm in assuming that $H$ is semisimple.
It follows from Example \ref{ex:g2} that $H$ is $G$-ir.
Thus Theorem~\ref{thm:korhonen-bad} follows.
So consider the setting of 
Theorem \ref{thm:korhonen-bad-finite} when $\sigma|_H$ is a $q$-Frobenius endomorphism of $H$ in this case. By Corollary \ref{cor:g2},
$u$ belongs to the subregular class of $G_2$.
It follows from the proof of Lemma \ref{lem:pst42} in \cite{PST} that $u$ is contained in a 
$\sigma$-stable
maximal rank subgroup of $G$ of type $A_1 \widetilde A_1$ and this type is unique.
Since $H$ is proper and semisimple, $H \subseteq M$, where 
$M$ is a $\sigma$-stable maximal rank subgroup of $G$ of type $A_1 \widetilde A_1$.
Since $p$ is good for $H$, there is a $\sigma$-stable subgroup $A$ of $H$ of type $A_1$ containing $u$, by Theorem \ref{thm:A1pgood}.
Thus $A \subseteq H \subseteq M$.
Since $u$ is also distinguished in $M$ and $p = 3$ is good for $M$, 
Theorem \ref{thm:dist-orderp-finite} shows that $A_\sigma$ is $M$-ir. 
Note that $M$ is the centralizer of a semisimple element of $G$ of order $2$
(by Deriziotis' Criterion, see \cite[2.3]{Deriziotis}).
Since $A_\sigma$ is $M$-cr, it is $G$-cr, owing to \cite[Cor.~3.21]{BMR}. Once again, by Lemma \ref{lem:GcrGir;dist}, $A_\sigma$ is $G$-ir and so is $H_\sigma$.
Theorem \ref{thm:korhonen-bad-finite} follows.
\end{proof}

\begin{rem}
    \label{rem:higherorder}
     In \cite[\S 7]{korhonen}, Korhonen gives counterexamples to Theorem \ref{thm:korhonen} when the order of the distinguished unipotent element of $G$ is greater than $p$ (even when $p$ is good for $G$ \cite[Prop.~7.1]{korhonen}). Theorem \ref{thm:dist} implies that this can only happen when $p < a(G)$.
	For instances of overgroups of distinguished unipotent elements of $G$ of order greater than $p$ for $p \ge a(G)$ (and $p$ good for $G$), so that Theorem \ref{thm:dist} applies, see Examples \ref{ex:c4e6} and \ref{ex:e8}.
\end{rem}

\begin{rem}
	\label{rem:maximalrank}
	In view of Remark \ref{rem:higherorder}, it is natural to ask for instances  of $G$, $u$ and $H$ when the conclusion of Theorem \ref{thm:dist} holds even when $p < a(G)$ but $p$ is still good for $G$. If $p$ is good for $G$ and 
	$G$ is simple classical, non-regular distinguished unipotent elements always belong to a maximal rank semisimple subgroup $H$ of $G$, by \cite[Prop.~3.1, Prop.~3.2]{testerman}. For $G$ simple of exceptional type this is also the case in almost all instances of non-regular distinguished unipotent elements, see \cite[Lem.~2.1]{testerman}. 
	Each such $H$ is obviously $G$-irreducible. This is independent of $p$ of course and thus applies in particular when  $p< a(G)$.  
	For instance, let $G$ be of type $E_7$, $p = 5$, and suppose $u$ belongs to the distinguished class $E_7(a_3)$ (resp.~$E_7(a_4)$, $E_7(a_5)$). Then $\hgt_J(\rho) = 9$ (resp.~$7$, $5$), so $u$ has order $5^2$, by Lemma \ref{lem:order} in each case.
	Since $u$ does not have order $5$, Theorem \ref{thm:korhonen} does not apply, and since $5 < 8 = a(G)$ neither does Theorem \ref{thm:dist}. 
	Nevertheless, in each case $u$ is contained in a maximal rank subgroup $H$ of type $A_1D_6$, see \cite[p.~52]{testerman}, and each such $H$ is $G$-ir.
\end{rem}

We close the section with several additional higher order examples in good characteristic when Theorem \ref{thm:korhonen} does not apply but Theorem \ref{thm:dist} does.

\begin{ex}
	\label{ex:c4e6}
	Let $G$ be of type $E_6$. Suppose $p$ is good for $G$.
	In \cite[Lem.\ 2.7]{testerman}, Testerman exhibits the existence of a simple subgroup $H$ of $G$ of type $C_4$  whose regular unipotent class belongs to the subregular class $E_6(a_1)$ of $G$. Let $u$ be regular unipotent in $H$. 
	For $p = 7$, the order of $u$ is $7^2$, by Lemma \ref{lem:order}, so Theorem \ref{thm:korhonen} can't be invoked to say anything about $H$. However, for $p = 7 = a(G)$, we infer from Theorem \ref{thm:dist} that $H$ is $G$-ir. 
\end{ex}

\begin{ex}
	\label{ex:e8}
	Let $G$ be of type $E_8$. Suppose $p = 11$.
	Let $u$ be in the distinguished class $E_8(a_3)$ (resp.~$E_8(a_4)$, $E_8(b_4)$, $E_8(a_5)$, or $E_8(b_5)$). From the corresponding weighted Dynkin diagram corresponding to $u$ we get $\hgt_J(\rho) = 17$ (resp.~$14$, $13$, $11$, or $11$), see \cite[p.~177]{carter:book}. It follows from Lemma \ref{lem:order} that in each of these instances $u$ has order $11^2$. So we can't appeal to Theorem \ref{thm:korhonen} to deduce anything about reductive overgroups of $u$. But as $11 = p \ge a(G) = 9$,  
	Theorem \ref{thm:dist} applies and allows us to conclude that each such overgroup is $G$-ir. For example, in each instance above, $u$ is contained in a maximal rank subgroup $H$ of $G$ of type $A_1E_7$ or $D_8$, see \cite[p.~52]{testerman}.
\end{ex}

\bigskip


\noindent {\bf Acknowledgments}:
We are grateful to M.~Korhonen and D.~Testerman for helpful comments on an earlier version of the manuscript, and to A.~Thomas for providing the $G_2$ example in Example \ref{ex:no_good}. We thank the referee for a number of comments clarifying some points. 
 The research of this work was supported in part by
the DFG (Grant \#RO 1072/22-1 (project number: 498503969) to G.~R\"ohrle). Some of this work was completed during a visit to the Mathematisches Forschungsinstitut Oberwolfach: we thank them for their support.  
For the purpose of open access, the authors have applied a Creative Commons Attribution (CC BY) licence to any Author Accepted Manuscript version arising from this submission.


\end{document}